\title{$K$-theory of weight varieties} 
\author{Ho-Hon Leung}
\keywords{Kirwan surjectivity, flag variety, weight variety, equivariant $K$-theory, symplectic quotient}
\subjclass{}
\newcommand{\func}[3]{#1 \colon #2 \to #3}
\theoremstyle{plain}
\newtheorem{theorem}{Theorem}[section]
\newtheorem{corollary}[theorem]{Corollary}
\newtheorem{lemma}[theorem]{Lemma}
\theoremstyle{definition}
\newtheorem{definition}[theorem]{Definition}
\newtheorem*{examplen}{Example}
\begin{document} 
 
\begin{abstract}  

Let $T$ be a compact torus and $(M,\omega)$ a Hamiltonian $T$-space. We give a new proof of the $K$-theoretic analogue of the Kirwan surjectivity theorem in symplectic geometry (see \cite{HaradaLandweber}) by using the equivariant version of the Kirwan map introduced in \cite{Goldin}. We compute the kernel of this equivariant Kirwan map, and hence give a computation of the kernel of the Kirwan map. As an application, we find the presentation of the kernel of the Kirwan map for the $T$-equivariant $K$-theory of flag varieties $G/T$ where $G$ is a compact, connected and simply-connected Lie group. In the last section, we find explicit formulae for the $K$-theory of weight varieties.

\end{abstract} 
\maketitle
\tableofcontents

\section{Introduction} \label{section1}

For $M$ a compact Hamiltonian $T$-space, where $T$ is a compact torus, we have a moment map $\phi\colon M\rightarrow\mathfrak{t}^{\ast}$. For any regular value $\mu$ of $\phi$, $\phi^{-1}(\mu)$ is a submanifold of $M$ and has a locally free $T$-action by the invariance of $\phi$. The \emph{symplectic reduction} of $M$ at $\mu$ is defined as $M//T(\mu):=\phi^{-1}(\mu)/T$. The parameter $\mu$ is surpressed when $\mu=0$. Kirwan \cite{Kirwan} proved that the natural map, which is now called the \emph{Kirwan map}, \[\kappa\colon H_T^\ast(M;\mathbb{Q})\rightarrow H_T^\ast(\phi^{-1}(0);\mathbb{Q})\cong H^\ast(M//T;\mathbb{Q})\] induced from the inclusion $\phi^{-1}(0)\subset M$ is a surjection when $0\in\mathfrak{t}^\ast$ is a regular value of $\phi$. This result was done in the context of rational Borel equivariant cohomology. In the context of complex $K$-theory, a theorem of Harada and Landweber \cite{HaradaLandweber} showed that \[\kappa\colon K_{T}^{\ast}(M)\rightarrow K_{T}^{\ast}(\phi^{-1}(0))\] is a surjection. This result was done over $\mathbb{Z}$. 

In Section \ref{section2}, we give another proof of this theorem by using \emph{equivariant Kirwan map}, which was first introduced by Goldin \cite{Goldin} in the context of rational cohomology. It can also be seen as an equivariant version of the Kirwan map.

\begin{theorem} \label{theorem1.1}
Let $T$ be a compact torus and $M$ be a compact Hamiltonian $T$-space with moment map $\phi\colon M\rightarrow\mathfrak{t}^{\ast}$. Let $S$ be a circle in $T$, and $\phi|_S:=M\rightarrow\mathbb{R}$ be the corresponding component of the moment map. For a regular value $0\in\mathfrak{t}^\ast$ of $\phi|_S$, the equivariant Kirwan map \[\kappa_{S}\colon K_T^\ast(M)\rightarrow K_T^\ast(\phi|_{S}^{-1}(0))\]is a surjection.
\end{theorem}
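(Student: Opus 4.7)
The plan is to adapt Goldin's equivariant Morse-Bott argument from equivariant cohomology \cite{Goldin} to $K$-theory, with the $K$-theoretic Atiyah-Bott non-zero-divisor property established by Harada and Landweber \cite{HaradaLandweber} playing the key technical role. Write $f := \phi|_S \colon M \to \mathbb{R}$. Since $f$ is the moment map for the $S$-action on $M$, it is a $T$-invariant Morse-Bott function with critical set $M^S$, and along each connected component $C \subseteq M^S$ the normal bundle decomposes $T$-equivariantly as $\nu_C = \nu_C^+ \oplus \nu_C^-$ according to the sign of the $S$-weights, all of which are nonzero. Consequently, the equivariant $K$-theoretic Euler classes $e_T^K(\nu_C^\pm) \in K_T^\ast(C)$ are non-zero-divisors by the $K$-theoretic Atiyah-Bott lemma.

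Using that $0$ is a regular value of $f$, pick $\epsilon > 0$ so small that $[-\epsilon, \epsilon]$ contains no critical value of $f$ and that the intervals $[c - \epsilon, c + \epsilon]$ at the various critical values $c$ are pairwise disjoint. Then $X_0 := f^{-1}([-\epsilon,\epsilon])$ $T$-equivariantly deformation retracts onto $f^{-1}(0)$ via the gradient flow of $f$. Enlarging $X_0$ by including one critical value of $f$ at a time (first the positive ones in increasing order, then the negative ones in decreasing order) produces a $T$-equivariant filtration
\[
X_0 \hookrightarrow X_1 \hookrightarrow X_2 \hookrightarrow \dots \hookrightarrow X_N = M,
\]
in which each $X_{i+1}$ is built from $X_i$ by attaching the Morse-Bott handle over the critical component $C_i \subseteq M^S$ at the newly included critical value. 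At each stage, the $K$-theoretic Thom isomorphism for the complex normal bundle gives $K_T^\ast(X_{i+1},X_i) \cong K_T^\ast(C_i)$, and the composition
\[
K_T^\ast(C_i) \xrightarrow{\,\cdot\,\tau\,} K_T^\ast(X_{i+1},X_i) \longrightarrow K_T^\ast(X_{i+1}) \xrightarrow{\ \text{res}\ } K_T^\ast(C_i)
\]
is multiplication by $e_T^K(\nu_{C_i}^{\pm})$. Since this Euler class is a non-zero-divisor in $K_T^\ast(C_i)$, the middle map is injective; from the long exact sequence of the pair, the restriction $K_T^\ast(X_{i+1}) \twoheadrightarrow K_T^\ast(X_i)$ is surjective. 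Telescoping over all $i$ produces the desired surjection $\kappa_S \colon K_T^\ast(M) \twoheadrightarrow K_T^\ast(f^{-1}(0))$.

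The main obstacle is the non-zero-divisor status of $e_T^K(\nu_C^\pm)$: in ordinary equivariant cohomology this is immediate from the existence of an invertible leading term, but in $K$-theory the Euler class is a product $\prod_i(1 - L_i^{-1})$ and the corresponding statement requires a genuine argument by localization at the $S$-characters, which is precisely the content of the Harada-Landweber lemma. Once this is granted, the remaining ingredients---the $T$-equivariant Morse-Bott handle structure of $f$, the equivariant $K$-theoretic Thom isomorphism for complex normal bundles, and the long exact sequence of a pair---are standard, and the telescoping is purely formal.
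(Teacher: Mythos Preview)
Your proof is correct and follows essentially the same Morse-theoretic strategy as the paper's. The one notable difference is in packaging: you work directly with the moment-map component $f=\phi|_S$ as a $T$-invariant Morse--Bott function, build the filtration outward from $f^{-1}(0)$ in both directions, and invoke the $K$-theoretic Atiyah--Bott lemma from \cite{HaradaLandweber} to split the long exact sequence at each critical level; the paper instead takes $f=\|\phi|_S\|^2$ as a Morse--Kirwan function and cites Harada--Landweber's equivariant-perfection result (Theorem~\ref{theorem2.3}) as a black box before running the identical induction. Since the critical sets of $\|\phi|_S\|^2$ away from its minimum coincide with the components of $M^S$, and since equivariant perfection in \cite{HaradaLandweber} is itself established via the Atiyah--Bott lemma, the two arguments are the same in substance. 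Your version has the minor advantage of remaining within genuine Morse--Bott theory (no Morse--Kirwan singularities to worry about), while the paper's version reduces the inductive step to a single citation.
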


As an immediate corollary of a result in \cite{HaradaLandweber}, we also find the kernel of this equivariant Kirwan map.

In Section \ref{section5}, for the special case $G=SU(n)$, we find an explicit formula for the $K$-theory of weight varieties, the symplectic reduction of flag varieties $SU(n)/T$. The main result is Theorem \ref{theorem5.3}. The results in this section are the $K$-theoretic analogues of \cite{GGoldin}.

\section{Equivariant Kirwan map in $K$-theory} \label{section2}

First we recall the basic settings of the subject. Let $G$ be a compact connected Lie group. A \emph{compact Hamiltonian $G$-space} is a compact symplectic manifold $(M,\omega)$ on which $G$ acts by symplectomorphisms, together with a $G$-equivariant moment map $\phi\colon M\rightarrow\mathfrak{g}^\ast$ satisfying Hamilton's equation:\[\langle\mathrm{d}\phi,X\rangle=\iota_{X'}\omega,\forall X \in \mathfrak{g}\] where $G$ acts on $\mathfrak{g}^\ast$ by the coadjoint action and $X'$ denotes the vector field on $M$ generated by $X \in \mathfrak{g}$. In this paper, we only deal with a compact torus action, so we will use the $T$-action on $M$ as our notation instead. Let $T'$ be a subtorus in $T$, $\phi|_{T'}\colon M\rightarrow\mathfrak{t'}^\ast$ is the restriction of the $T$-action to the $T'$-action. We call $\phi|_{T'}$ the component of the moment map corresponding to $T'$ in $T$.

We fix the notations about Morse theory. Let $f\colon M\rightarrow\mathbb{R}$ be a Morse function on a compact Riemannian manifold $M$. Consider its negative gradient flow on $M$, let $\{C_i\}$ be the connected components of the critical sets of $f$. Define the stratum $S_i$ to be the set of points of $M$ which flow down to $C_i$ by their paths of steepest descent. There is an ordering on $I$: $i\leq j$ if $f(C_i)\leq f(C_j)$. Hence we obtain a smooth stratification of $M=\cup S_i$. For all $i,j \in I$, denote \[M_i^{+}=\bigcup_{j\leq i}S_j,\quad M_i^{-}=\bigcup_{j<i}S_j\]As we are working in the equivariant category, we require that the Morse function and the Riemannian metric to be $T$-invariant.

In the following, we will consider the norm square of the moment map. In general, it is not a Morse function due to the possible presence of singularities of the critical sets. But the norm square of the moment map still yields a smooth stratifications and the results of Morse-Bott theory still holds in this general setting (Such functions are now called the Morse-Kirwan functions). For the descriptions and properties of these functions, see \cite{Kirwan}. Kirwan proved that the Morse-Kirwan functions are equivariantly perfect in the context of rational cohomology. For more results in this direction, see \cite{Kirwan} and \cite{Lerman}. In the context of equivariant $K$-theory, the following result is shown in \cite{HaradaLandweber}:

\begin{lemma}[Harada and Landweber] \label{lemma2.1}
Let $T$ be a compact torus and $(M,\omega)$ be a compact Hamiltonian $T$-space with moment map $\phi\colon M\rightarrow\mathfrak{t}^\ast$. Let $f=||\phi||^2$ be the norm square of the moment map. Let $\{C_i\}$ be the connected components of the critical sets of $f$ and the stratum $S_i$ be the set of points of $M$ which flow down to $C_i$ by their paths of steepest descent. The inclusion $C_i\to S_i$ of a critical set into its stratum induces an isomorphism $K_T^\ast(S_i)\cong K_T^\ast(C_i)$.
\end{lemma}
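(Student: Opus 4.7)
The overall plan is to exploit the negative gradient flow of $f=\|\phi\|^2$ to produce a $T$-equivariant strong deformation retraction of the stratum $S_i$ onto its critical set $C_i$, and then invoke $T$-equivariant homotopy invariance of $K^\ast_T$.

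First, I would recall the structure of $S_i$ when $f$ is only Morse--Kirwan. By Kirwan's minimally degenerate analysis (cited in the excerpt), each $S_i$ is a smooth locally closed $T$-invariant submanifold of $M$; moreover, the restriction $f|_{S_i}$ attains its minimum exactly along $C_i$, and $S_i$ is $T$-equivariantly diffeomorphic to a tubular neighbourhood of $C_i$ in $S_i$, which in turn is modelled by the negative normal bundle to $C_i$ inside $S_i$. Because the Riemannian metric has been chosen $T$-invariant, the negative gradient vector field $-\nabla f$ is a $T$-equivariant vector field on $M$ that is tangent to $S_i$.

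The next step is to show that the time-$t$ flow $\psi_t$ of $-\nabla f$ restricts to $\psi_t\colon S_i \to S_i$ and that $\psi_t(x)\to C_i$ as $t\to\infty$ for every $x\in S_i$, uniformly on compact subsets. The convergence is the content of the stratum definition (points in $S_i$ are by construction those whose path of steepest descent limits onto $C_i$); the uniformity and smoothness of the limit map, together with the existence of a well-defined continuous retraction $r\colon S_i\to C_i$ sending $x$ to $\lim_{t\to\infty}\psi_t(x)$, is guaranteed by Kirwan's analysis together with Lerman's refinement. Combining the flow with the limit map, one obtains a $T$-equivariant homotopy $H\colon S_i\times[0,1]\to S_i$ with $H_0=\mathrm{id}_{S_i}$, $H_1=r$, and $H_t|_{C_i}=\mathrm{id}_{C_i}$ for all $t$. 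Thus the inclusion $\iota\colon C_i\hookrightarrow S_i$ is a $T$-equivariant strong deformation retract.

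Finally, since $S_i$ and $C_i$ are $T$-spaces with the $T$-homotopy type of finite $T$-CW complexes, the $T$-equivariant homotopy invariance of $K^\ast_T$ implies that $\iota^\ast\colon K^\ast_T(S_i)\to K^\ast_T(C_i)$ is an isomorphism, which is the desired statement.

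The main obstacle is the last part of step two: the gradient flow on a non-Morse function could in principle fail to have a well-defined continuous limit, or the retraction $r$ could fail to be smooth or equivariant in the sense required. This is precisely the non-trivial content of the Morse--Kirwan/minimally degenerate framework, and would have to be invoked rather than reproved — the proof essentially reduces to assembling Kirwan's stratification theorem, $T$-invariance of the chosen metric, and equivariant homotopy invariance of $K$-theory.
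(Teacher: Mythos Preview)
The paper does not give its own proof of this lemma: it is quoted as a result of Harada and Landweber and simply cited from \cite{HaradaLandweber}. So there is no in-paper argument to compare against, only the original source.

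Your outline is essentially the argument used in the cited reference. Harada and Landweber (building on Kirwan and Lerman) establish that each stratum $S_i$ is a smooth $T$-invariant locally closed submanifold and that the negative gradient flow of $\|\phi\|^2$ provides a $T$-equivariant deformation retraction of $S_i$ onto $C_i$; the isomorphism $K_T^\ast(S_i)\cong K_T^\ast(C_i)$ then follows from equivariant homotopy invariance. Your identification of the delicate point --- that for a merely minimally degenerate function one must invoke Kirwan's structural results (and Lerman's refinement on the existence and continuity of the limit map) rather than naive Morse theory --- is exactly right, and is precisely what the original authors do. In short, your proposal is correct and matches the approach of the source the paper is citing.
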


For a smooth stratification $M=\cup S_i$ defined by a Morse-Kirwan function $f$, i.e. the strata $S_i$ are locally closed submanifolds of $M$ and each of them satisfies the closure property $\overline{S}_i\subseteq M_i^{+}$. We have a $T$-normal bundle $N_i$ to $S_i$ in $M$. By excision, we have \[K_T^\ast(M_i^{+},M_i^{-})\cong K_T^\ast(N_i,N_i\backslash S_i)\]If $N_i$ is complex, by Thom Isomorphism we have \[K_T^\ast(N_i,N_i\backslash S_i)\cong K_T^{\ast-d(i)}(S_i)\]where the degree $d(i)$ of the stratum is the rank of its normal bundle $N_i$. Since the collection of the sets $M_i^{+}$ gives a filtration of $M$, we obtain a filtration of $K_T^\ast(M)$ and a spectral sequence \[E_1=\bigoplus_{i\in I}K_T^\ast(M_i^{+},M_i^{-})=\bigoplus_{i\in I}K_T^{-d(i)}(S_i),\quad E_\infty=\mbox{Gr}K_T^\ast(M)\]which converges to the associated graded algebra of the equivariant $K$-theory of $M$. By Lemma \ref{lemma2.1}, the spectral sequence becomes \[E_1=\bigoplus_{i \in I}K_T^{\ast-d(i)}(C_i),\quad E_\infty=\mbox{Gr}K_T^\ast(M)\]

\begin{definition}
The function $f$ is called \emph{equivariantly perfect} for equivariant $K$-theory if the above spectral sequence for equivariant $K$-theory collapses at the $E_1$ page, or equivalently speaking, we have the following short exact sequences: \[0\longrightarrow K_T^{\ast-d(i)}(C_i)\longrightarrow K_T^\ast(M_i^{+})\longrightarrow K_T^\ast(M_i^-)\longrightarrow 0\]for each $i \in I$.
\end{definition}

In \cite{HaradaLandweber}, Harada and Landweber showed the following theorem. (Indeed, they showed it for a compact Lie group $G$. But in our paper, we only need to consider the abelian case.)

\begin{theorem}[Harada and Landweber] \label{theorem2.3}
Let $T$ be a compact torus and $(M,\omega)$ be a compact Hamiltonian $T$-space with moment map $\phi\colon M\rightarrow\mathfrak{t}^\ast$. The norm square of the moment map $f=||\phi||^2$ is an equivariantly perfect Morse-Kirwan function for equivariant $K$-theory. By Bott periodicity in complex equivariant $K$-theory, we can rewrite the short exact sequences as: \[0\longrightarrow K_T^{\ast}(C_i)\longrightarrow K_T^\ast(M_i^{+})\longrightarrow K_T^\ast(M_i^-)\longrightarrow 0\]
\end{theorem}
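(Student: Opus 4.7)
The plan is to show that the spectral sequence displayed just above collapses at the $E_1$ page; equivalently, via the long exact sequence of the pair $(M_i^+,M_i^-)$ in equivariant $K$-theory, it is enough to prove that for each $i\in I$ the map
\[
K_T^*(M_i^+,M_i^-)\longrightarrow K_T^*(M_i^+)
\]
is injective. I would proceed by induction along the ordering of critical values given in the excerpt, with the base case being the absolute minimum where $M_i^+=C_i$ and the statement is automatic.

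For the inductive step, excision together with the $T$-equivariant Thom isomorphism identifies $K_T^*(M_i^+,M_i^-)$ with $K_T^{*-d(i)}(S_i)$, and Lemma \ref{lemma2.1} then identifies this with $K_T^{*-d(i)}(C_i)$. Under these identifications, the composition
\[
K_T^{*-d(i)}(C_i)\longrightarrow K_T^*(M_i^+,M_i^-)\longrightarrow K_T^*(M_i^+)\longrightarrow K_T^*(C_i)
\]
reduces, via the usual self-intersection formula in equivariant $K$-theory, to multiplication by the $K$-theoretic Euler class $\lambda_{-1}(N_i^*)\in K_T^0(C_i)$ of the negative normal bundle to $C_i$. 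The problem therefore reduces to proving that multiplication by $\lambda_{-1}(N_i^*)$ is injective on $K_T^*(C_i)$.

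For this, I would exploit the fact that for $C_i$ outside the minimum stratum, the subtorus $T'\subseteq T$ fixing $C_i$ pointwise acts non-trivially on the fibers of $N_i$. Decomposing $N_i=\bigoplus_\alpha L_\alpha$ into weight spaces, one has $\lambda_{-1}(N_i^*)=\prod_\alpha(1-L_\alpha^{-1})$, and at a $T$-fixed point each factor specializes to the element $1-e^{-\alpha}$ of $R(T)$, which is a non-zero divisor in the Laurent polynomial ring $R(T)$ since $\alpha\neq 0$. Using equivariant formality of the Hamiltonian torus action restricted to the Morse-Bott critical set, $K_T^*(C_i)$ is a free $R(T)$-module, so the non-zero divisor property passes from $R(T)$ to $K_T^*(C_i)$ and yields the desired injectivity.

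The main obstacle is precisely this last step. In equivariant cohomology, Kirwan's classical argument uses the Atiyah-Bott observation that equivariant Euler classes are polynomials of pure positive degree and are therefore automatically non-zero divisors; in $K$-theory the Euler class is only a Laurent polynomial, so a naive degree argument fails. The replacement must use structural input such as the equivariant formality of $K_T^*(C_i)$ over $R(T)$, or equivalently the Atiyah-Segal localization theorem applied to the $T$-fixed set inside $C_i$, to transport the non-zero divisor property from $R(T)$ to $K_T^*(C_i)$. Once the injectivity is in place, the short exact sequences fall out of the long exact sequence of the pair, and the reindexing in the statement of the theorem is Bott periodicity in complex equivariant $K$-theory.
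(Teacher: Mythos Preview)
The paper does not contain a proof of Theorem~\ref{theorem2.3}; it is stated as a result of Harada and Landweber from \cite{HaradaLandweber} and invoked without argument. There is thus no proof in the paper to compare your proposal against.

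For what it is worth, your outline is essentially the Harada--Landweber argument: reduce via the long exact sequence of the pair and the Thom isomorphism to showing that multiplication by the $K$-theoretic Euler class $\lambda_{-1}(N_i^*)$ is injective on $K_T^*(C_i)$, and then establish this injectivity using that a nontrivial subtorus $T'\subseteq T$ fixes $C_i$ pointwise and acts with no zero weights on $N_i$. One quibble: since $\lambda_{-1}(N_i^*)\in K_T^*(C_i)$ rather than $R(T)$, the phrase ``the non-zero divisor property passes from $R(T)$ to $K_T^*(C_i)$'' is not literally the mechanism you need; freeness of $K_T^*(C_i)$ over $R(T)$ tells you that nonzero elements of $R(T)$ act injectively, not that arbitrary nonzero elements of $K_T^*(C_i)$ do. The actual device is either the tensor splitting $K_T^*(C_i)\cong R(T')\otimes_{\mathbb Z} K_{T/T'}^*(C_i)$ coming from the trivial $T'$-action on $C_i$, combined with a leading-term argument for $\lambda_{-1}(N_i^*)$ in the $R(T')$-direction, or, as you also suggest, injectivity of restriction to $T$-fixed points via Atiyah--Segal localization, where the Euler class becomes a product of genuine nonzero divisors $1-e^{-\alpha}\in R(T)$. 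Either route closes the gap you flagged.
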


Let $\phi|_S\colon M\rightarrow\mathbb{R}$ be the component of the moment map $\phi$ corresponding to a circle $S$ in $T$. Equivalently we are considering a compact Hamiltonian $S$-manifold with the moment map $\phi|_S$. By Theorem \ref{theorem2.3} above, the norm square of the moment map $||\phi|_S||^2$ is an equivariantly perfect Morse-Kirwan function for equivariant $K$-theory. We can now give our proof of Theorem \ref{theorem1.1}.

\begin{proof}[Proof of Theorem \ref{theorem1.1}]
Our proof is essentially the $K$-theoretic analogue of Theorem 1.2 in \cite{Goldin}. For the Morse-Kirwan function $f=||\phi|_{S}||^2$, denote $C_0 =f^{-1}(0)=\phi|_S^{-1}(0)$.

First, we need to show that $K_T^\ast(M_i^+)\rightarrow K_T^\ast(\phi|_S^{-1}(0))$ is surjective for all $i \in I$. We will show it by induction. 
 
Notice that $K_T^\ast(M_0^{+})\cong K_T^\ast(C_0)=K_T^\ast(\phi|_S^{-1}(0))$ by Theorem \ref{theorem2.3}. Assume the inductive hypothesis that $K_T^\ast(M_i^{+})\rightarrow K_T^\ast(C_0)$ is surjective for $0\leq i \leq k-1$. By the equivariant homotopy equivalence, we have \[K_T^\ast(M_k^-)\cong K_T^\ast(M_{k-1}^+)\]Hence, we now have the surjection of \begin{equation} \label{equation1} K_T^\ast(M_k^{-})\cong K_T^\ast(M_{k-1}^{+})\rightarrow K_T^\ast(C_0) \end{equation}
By Theorem \ref{theorem2.3}, we know that $K_T^\ast(M_i^{+})\rightarrow K_T^{\ast}(M_i^{-})$ is a surjection for each $i$. By equation (\ref{equation1}), $K_T^\ast(M_k^{+})\rightarrow K_T^\ast(C_0)$ is a surjection and hence our induction works.

Given that $K_T^\ast(M)=K_T^\ast(\varinjlim M_i^{+})=\varprojlim K_T^\ast(M_i^{+})$, these equalities hold because we have the surjections $K_T^\ast(M_i^{+})\rightarrow K_T^\ast(M_i^{-})$ for all $i$. Hence we have the surjection result for $\kappa_S \colon K_T^\ast(M) \rightarrow K_T^\ast(C_0)=K_T^\ast(\phi|_S^{-1}(0))$, as desired.
\end{proof}

\begin{corollary} \label{corollary2.4}
Let $T$ be a compact torus and $M$ be a compact Hamiltonian $T$-space with moment map $\phi\colon M\rightarrow\mathfrak{t}^\ast$. Suppose that $T$ acts freely on the zero level set of the moment map. Then \[\kappa\colon K_T^\ast(M)\rightarrow K^\ast(M//T)\]is a surjection.
\end{corollary}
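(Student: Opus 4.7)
The plan is to iteratively apply Theorem \ref{theorem1.1} by reducing one circle factor of $T$ at a time, proceeding by induction on the rank $n=\dim T$. Since $T$ acts freely on $\phi^{-1}(0)$, the standard identification $K^\ast(M//T)=K^\ast(\phi^{-1}(0)/T)\cong K_T^\ast(\phi^{-1}(0))$ reduces the problem to showing that the restriction $K_T^\ast(M)\rightarrow K_T^\ast(\phi^{-1}(0))$ is surjective. The base case $n=1$ is immediate from Theorem \ref{theorem1.1} with $S=T$, since the freeness hypothesis ensures that $0$ is a regular value of $\phi$.

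For the inductive step I would choose a splitting $T=S\times T'$ where $S$ is a circle and $T'$ has rank $n-1$, with the splitting chosen generically enough that $0\in\mathbb{R}$ is a regular value of the corresponding moment-map component $\phi|_S\colon M\rightarrow\mathbb{R}$ and, moreover, $S$ acts freely on $\phi|_S^{-1}(0)$. Theorem \ref{theorem1.1} then yields a surjection $\kappa_S\colon K_T^\ast(M)\rightarrow K_T^\ast(\phi|_S^{-1}(0))$, and the freeness of the $S$-action allows the identification $K_T^\ast(\phi|_S^{-1}(0))\cong K_{T'}^\ast(M_1)$, where $M_1:=\phi|_S^{-1}(0)/S$ is a smooth compact Hamiltonian $T'$-manifold whose moment map $\bar{\phi}_{T'}$ is induced from the $T'$-component of $\phi$.

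Reduction in stages yields $M_1//T'\cong M//T$, and the freeness of $T$ on $\phi^{-1}(0)$ descends to freeness of $T'$ on $\bar{\phi}_{T'}^{-1}(0)=\phi^{-1}(0)/S$. Thus the inductive hypothesis applies to the Hamiltonian $T'$-space $M_1$, yielding a surjection $K_{T'}^\ast(M_1)\rightarrow K^\ast(M_1//T')\cong K^\ast(M//T)$. Composing this with $\kappa_S$ produces the desired Kirwan surjection $\kappa\colon K_T^\ast(M)\rightarrow K^\ast(M//T)$.

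The main obstacle will be the genericity argument in the inductive step: one must exhibit a splitting $T=S\times T'$ for which $0$ is a regular value of $\phi|_S$ \emph{and} $S$ acts freely on the larger level set $\phi|_S^{-1}(0)$, with enough flexibility left to repeat the reduction on $M_1$. The freeness of $T$ on $\phi^{-1}(0)$ does not automatically transfer to $\phi|_S^{-1}(0)$, so a Sard/Baire-type argument (using that the loci of non-principal $S$-stabilizers have positive codimension, together with the countability of possible stabilizer subgroups) is needed to justify the choice. Once the splitting is secured, the remainder is a routine diagram chase in equivariant $K$-theory using the standard isomorphism $K_T^\ast(X)\cong K_{T/S}^\ast(X/S)$ for free $S$-actions.
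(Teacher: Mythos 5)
Your argument is essentially the paper's: split $T$ into circle factors, apply Theorem~\ref{theorem1.1} to one circle at a time, use reduction in stages to pass the surjection down to $K^\ast(M//T)$, and induct on $\dim T$. You are right to flag the genericity issue as the real content here: the paper's two-line proof quietly assumes that each circle factor $S$ can be chosen so that $0$ is a regular value of $\phi|_{S}$ and $S$ acts freely (not merely locally freely) on the \emph{larger} level set $\phi|_{S}^{-1}(0)$ --- as needed both for Theorem~\ref{theorem1.1} to apply and for the identification $K_T^\ast(\phi|_{S}^{-1}(0))\cong K_{T/S}^\ast(M//S)$ --- and this does not follow automatically from freeness of $T$ on $\phi^{-1}(0)$, but requires exactly the finiteness-of-orbit-types / Sard-type argument you sketch.
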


\begin{proof} Choose a splitting of $T=S_1\times S_2\times...\times S_{\mbox{dim}T}$ where each $S_i$ is quotiented out one at a time. Since $T$ acts freely on the zero level set of the moment map, by Theorem \ref{theorem1.1}, we have \[\kappa_{S_1}\colon K_T^\ast(M)\rightarrow K_T^\ast(\phi|_{S_1}^{-1}(0))\cong K_{T/S_1}^\ast(M//S_1)\]is a surjection. By reduction in stages, we have \[K_T^\ast(M)\rightarrow K_{T/S_1}^\ast(M//S_1)\rightarrow K_{T/(S_1\times S_2)}^\ast(M//(S_1\times S_2))\rightarrow ... \rightarrow K_{T/T}^\ast(M//T)=K^\ast(M//T)\]as desired.
\end{proof}

We compute the kernel of our equivariant Kirwan map, which can be seen as a $K$-theoretic analogue of \cite{Goldin}.

\begin{theorem} \label{corollary3.1}
Let $T$ be a compact torus and $M$ be a compact Hamiltonian $T$-space with moment map $\phi\colon M\rightarrow\mathfrak{t}^\ast$. Let $T'$ be a subtorus in $T$. Let $\phi|_{T'}$ be the corresponding moment map for the Hamiltonian $T'$-action on $M$. For 0 a regular value of $\phi|_{T'}$, the kernel of the equivariant Kirwan map \[\kappa_{T'}\colon K_T^\ast(M)\rightarrow K_T^\ast(\phi|_{T'}^{-1}(0))\]is the ideal $\langle K_{T}^{\mathfrak{t}'}\rangle$ generated by $K_{T}^{\mathfrak{t}'}=\cup_{\xi\in \mathfrak{t}'}K_{T}^{\xi}$ where \[K_T^\xi=\{\alpha\in K_T^\ast(M) \mid \alpha|_C=0 \mbox{ for all connected components } C \mbox{ of } M^{T} \mbox{ with } \langle\phi(C),\xi\rangle\leq 0\}\]
\end{theorem}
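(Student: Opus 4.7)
The plan is to establish the two inclusions $\langle K_T^{\mathfrak{t}'}\rangle\subseteq\ker\kappa_{T'}$ and $\ker\kappa_{T'}\subseteq\langle K_T^{\mathfrak{t}'}\rangle$ separately, mirroring Goldin's treatment in rational cohomology while making use of the $K$-theoretic Morse--Kirwan machinery provided by \cite{HaradaLandweber}.

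For the containment $\langle K_T^{\mathfrak{t}'}\rangle\subseteq\ker\kappa_{T'}$, I would fix $\xi\in\mathfrak{t}'$ and $\alpha\in K_T^\xi$, and write $\phi_\xi=\langle\phi,\xi\rangle$. Since $\phi|_{T'}^{-1}(0)\subseteq\phi_\xi^{-1}(0)\subseteq\phi_\xi^{-1}((-\infty,0])$, it is enough to prove $\alpha|_{\phi_\xi^{-1}((-\infty,0])}=0$. The function $\phi_\xi$ is the moment map for the subcircle $S_\xi\subseteq T'$ generated by $\xi$ (or its closure), and its critical components are the connected components of $M^{S_\xi}$, on each of which $\phi_\xi$ is constant. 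Applying the equivariant perfection in $K$-theory (the moment-map-component analogue of Theorem \ref{theorem2.3} for $\phi_\xi$, with the $T$-equivariance retained) yields, at each critical value $c$, a short exact sequence
\[0\to K_T^*(F_c)\xrightarrow{j_*} K_T^*(\phi_\xi^{-1}((-\infty,c+\varepsilon]))\to K_T^*(\phi_\xi^{-1}((-\infty,c-\varepsilon]))\to 0,\]
where $F_c$ is the union of critical components at height $c$ and $j_*$ is the Thom--Gysin pushforward. I would induct on critical values $c\le 0$: the inductive hypothesis lets me write $\alpha|_{\phi_\xi\le c+\varepsilon}=j_*(\gamma)$ for a unique $\gamma\in K_T^*(F_c)$, and the push--pull formula $j^*j_*\gamma=\gamma\cdot e_K(N_c^-)$ together with $\alpha\in K_T^\xi$ forces $\gamma|_{F_c^T}\cdot e_K(N_c^-)|_{F_c^T}=0$; since the $T$-weights on $N_c^-$ restrict to nontrivial $S_\xi$-weights, $e_K(N_c^-)$ is a non-zero-divisor at each component of $F_c^T$, so $\gamma|_{F_c^T}=0$. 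Since $F_c$ is itself a compact Hamiltonian $T$-space, equivariant formality of $K_T^*(F_c)$ as a free $R(T)$-module provides the integral injectivity $K_T^*(F_c)\hookrightarrow K_T^*(F_c^T)$, giving $\gamma=0$ and closing the induction.

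For the reverse containment $\ker\kappa_{T'}\subseteq\langle K_T^{\mathfrak{t}'}\rangle$, this is the promised corollary of the $K$-theoretic Tolman--Weitsman-type kernel description in \cite{HaradaLandweber}. Applied to the Hamiltonian $T'$-action on $M$ while carrying the residual $T/T'$-equivariance, it identifies $\ker\kappa_{T'}$ with the ideal generated by classes vanishing on components of $M^{T'}$ in the sublevel sets $\{\phi_\xi\le 0\}$ for $\xi\in\mathfrak{t}'$. Since $M^T\subseteq M^{T'}$ and each component $C$ of $M^T$ is contained in a component $C'$ of $M^{T'}$ with $\phi_\xi(C)=\phi_\xi(C')$, the vanishing condition on $M^{T'}$ is stronger than that on $M^T$, so the ideal from \cite{HaradaLandweber} is a subset of $\langle K_T^{\mathfrak{t}'}\rangle$, giving the desired inclusion.

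The main obstacle is the first direction, in which two $K$-theoretic inputs must be verified integrally. The first is equivariant perfection for a single moment-map component $\phi_\xi$, rather than for $\|\phi\|^2$ as in Theorem \ref{theorem2.3}; this should follow by running the Morse--Kirwan argument of \cite{HaradaLandweber} directly on $\phi_\xi$, using that its negative normal bundles at the critical components are equivariantly nontrivial for $S_\xi$. The second is the integral injectivity $K_T^*(F_c)\hookrightarrow K_T^*(F_c^T)$ for each critical component $F_c$, which is $K$-theoretic equivariant formality of Hamiltonian $T$-spaces. With both in place, the push--pull argument forces the inductive class $\gamma$ to vanish over $\mathbb{Z}$, the induction carries through, and the two inclusions combine to give $\ker\kappa_{T'}=\langle K_T^{\mathfrak{t}'}\rangle$.
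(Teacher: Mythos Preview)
Your argument is correct, but it takes a substantially different route from the paper's. The paper's proof is a three-line reduction-in-stages: split $T'=S_1\times\cdots\times S_d$, invoke Theorem~3.1 of \cite{HHaradaLandweber} to identify $\ker\kappa_{S_i}$ with $\langle K_T^{\xi_i},K_T^{-\xi_i}\rangle$ for a generator $\xi_i\in\mathfrak{s}_i$, and then compose circle-by-circle. Both inclusions are thus inherited simultaneously from the circle case via iteration; no separate Morse-theoretic argument is made.

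By contrast, you treat the two inclusions asymmetrically. For $\langle K_T^{\mathfrak{t}'}\rangle\subseteq\ker\kappa_{T'}$ you re-run the Atiyah--Bott/Tolman--Weitsman induction on sublevel sets of $\phi_\xi$ directly, with the self-intersection formula and $K$-theoretic injectivity on critical submanifolds forcing the lifted class $\gamma$ to vanish; this is essentially the proof of one half of the circle result in \cite{HHaradaLandweber}, done here for an arbitrary $\xi\in\mathfrak{t}'$ rather than only for generators of a chosen splitting. For the reverse inclusion you appeal to the full Tolman--Weitsman description for the $T'$-action and then pass from the $M^{T'}$-condition to the $M^T$-condition. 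Two remarks on this half: first, the relevant kernel result is in \cite{HHaradaLandweber}, not \cite{HaradaLandweber}; second, as stated there it is a circle-by-circle statement, so ``applied to the Hamiltonian $T'$-action'' already implicitly uses the same reduction-in-stages step the paper makes explicit. What your approach buys is a cleaner conceptual separation of the two inclusions and an argument that handles all $\xi\in\mathfrak{t}'$ at once rather than only a basis; what the paper's approach buys is brevity, since both containments come for free from the single cited circle result plus iteration.
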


\begin{proof} Choose a splitting of $T'=S_1\times S_2\times...\times S_{\mbox{dim}T'}$ where each $S_i$ is quotiented out one at a time. By Theorem 3.1 in \cite{HHaradaLandweber}, the kernel of the equivariant Kirwan map $\kappa_{S_i}$ is generated by $K_T^\xi$ and $K_T^{-\xi}$ for a choice of generator $\xi\in\mathfrak{s}_i$. By successive application of this result to each $S_i$ where $i=1,2,3,\cdots\mbox{dim}T'$, we get our desired result.
\end{proof}

\section{$K$-theory of weight variety} \label{section5}
\subsection{Weight varieties} \label{section5.1}
If $G=SU(n)$, we can naturally identify the set of Hermitian matrices $H$ with $\mathfrak{g}^\ast$ by the trace map, i.e. $tr\colon (H)\rightarrow\mathfrak{g}^\ast$ defined by $A\mapsto i.tr(A)$. So $\lambda\in\mathfrak{t}^\ast$ is a real diagonal matrix with entries $\lambda_1,\lambda_2,...,\lambda_n$ in the diagonal. Through this identification, $M=\mathcal{O}_{\lambda}$ is an adjoint orbit of $G$ through $\lambda$. The moment map corresponding to the $T$-action on $\mathcal{O}_{\lambda}$ takes a matrix to its diagonal entries, call it $\mu\in\mathfrak{t}^\ast$. Hence, $\mathcal{O}_{\lambda}//T(\mu)$, $\mu\in\mathfrak{t}^{\ast}$ is the symplectic quotient by the action of diagonal matrices at $\mu\in\mathfrak{t}^{\ast}$. The symplectic quotient consists of all Hermitian matrices with spectrum $\lambda=(\lambda_1,\lambda_2,...,\lambda_n)$ and diagonal entries $\mu=(\mu_1,\mu_2,...,\mu_n)$. We call this symplectic quotient $\mathcal{O}_{\lambda}//T(\mu)$ a \emph{weight variety}.

If $\lambda=(\lambda_1,\lambda_2,...,\lambda_n)$ has the property that all entries have distinct values, then $\mathcal{O}_{\lambda}$ is a generic coadjoint orbit of $SU(n)$. It is symplectomorphic to a complete flag variety in $\mathbb{C}^n$. In this section, we mainly deal with the generic case unless otherwise stated. For more about the properties of weight varieties, see \cite{Knutson}. For the Weyl element action of any $\gamma\in W$ on $\lambda\in\mathfrak{t}^{\ast}$, we are going to use the notation $\lambda_{\gamma}=(\lambda_{\gamma^{-1}(1)},...,\lambda_{\gamma^{-1}(n)})$ in our proofs for our notational convenience.

\subsection{Divided difference operators and double Grothendieck polynomials} \label{section5.2}

Let $f$ be a polynomial in $n$ variables, call them $x_1,x_2,...,x_n$ (and possibly some other variables), the \emph{divided difference operator} $\partial_i$ is defined as \[\partial_i f(...,x_i,x_{i+1},...)=\frac{f(...,x_i,x_{i+1},...)-f(...x_{i+1},x_i,...)}{x_i -x_{i+1}}\]The \emph{isobaric divided difference operator} is \[\pi_i(f)=\partial_i(x_i f)=\frac{x_i f(...,x_i,x_{i+1},...)-x_{i+1}f(...,x_{i+1},x_i,...)}{x_i -x_{i+1}}\]The \emph{top Grothendieck polynomial} is \[G_{id}(x,y)=\prod_{i<j}(1-\frac{y_j}{x_i})\]Note that the isobaric divided difference operator acts on $G_{id}$ naturally by $\pi_i(G_{id})$. And $\pi_i(P.Q)=\pi_i(P)Q$ provided that $Q$ is a symmetric polynomial in $x_1,x_2,...x_n$. So this operator preserves the ideal generated by all differences of elementary symmetric polynomials $e_i(x_1,...,x_n)-e_i(y_1,...,y_n)$ for all $i=1,...,n$, denote this ideal by $I$. That is, the operator $\pi_i$ acts on the ring $R$ defined by \[R=\frac{\mathbb{Z}[x_1^{\pm 1},...,x_n^{\pm 1},y_1^{\pm 1},...,y_n^{\pm 1}]}{I}\]

For any element $\omega\in S_n$, $\omega$ has reduced word expression $\omega=s_{i_1}s_{i_2}...s_{i_l}$ (where each $s_{i_j}$ is a transposition between $i_j,i_{j+1}$). We can define the corresponding operator:\[\pi_{s_{i_1}s_{i_2}...s_{i_l}}=\pi_{s_{i_1}}...\pi_{s_{i_l}}\]which is independent of the choice of the reduced word expression.

For any $\mu\in S_n$, the \emph{double Grothendieck polynomial} $G_{\mu}$ is: \[\pi_{\mu^{-1}}G_{id}=G_\mu\]Define the permuted double Grothendieck polynomials $G_{\omega}^{\gamma}$ by \[G_{\omega}^{\gamma}(x,y)=G_{\gamma^{-1}\omega}(x,y_{\gamma})=\pi_{\omega^{-1} \gamma}G_{id}(x,y_{\gamma})\]where $y_{\gamma}$ means the permutation of the $y_1,...,y_n$ variables by $\gamma$.

\begin{examplen}  For $G=SU(3),W=S_3$, we have \[G_{id}=(1-\frac{y_2}{x_1})(1-\frac{y_3}{x_1})(1-\frac{y_3}{x_2})\]
\begin{eqnarray}
G_{(23)}^{(12)}&=&\pi_{(23)(12)}G_{id}(x,y_{(12)})\nonumber\\
&=&\pi_{(23)(12)}\left(1-\frac{y_3}{x_1}\right)\left(1-\frac{y_1}{x_1}\right)\left(1-\frac{y_3}{x_2}\right)\nonumber\\
&=&\pi_{(23)}\left(\frac{x_1\left(1-\frac{y_3}{x_1}\right)\left(1-\frac{y_1}{x_1}\right)\left(1-\frac{y_3}{x_2}\right)-x_2\left(1-\frac{y_3}{x_2}\right)\left(1-\frac{y_1}{x_2}\right)\left(1-\frac{y_3}{x_2}\right)}{x_1-x_2}\right)\nonumber\\
&=&\pi_{(23)}\left(1-\frac{y_3}{x_1}\right)\left(1-\frac{y_3}{x_2}\right)\nonumber\\
&=&\left(1-\frac{y_3}{x_1}\right)\nonumber
\end{eqnarray}
\end{examplen}

\subsection{$T$-equivariant $K$-theory of flag varieties} \label{section5.3}

We have the following formula for $K_T^\ast(SU(n)/T)$ (see \cite{wfulton}): \[
K_T^{\ast}(SU(n)/T)\cong R(T)\otimes_{R(G)}R(T)\cong R(T)\otimes_\mathbb{Z}R(T)/J\] where $R(G)\cong R(T)^W$ and $R(T)$ are the character rings of $G, T$, where $G=SU(n)$, respectively. $J\subset R(T)\otimes_{\mathbb{Z}}R(T)$ is the ideal generated by $a\otimes 1-1\otimes a$ for all elements $a\in R(T)^W$. $R(T)^W$ is the Weyl group invariant of $R(T)$.

$R(T)$ can be written as a polynomial ring: \[R(T)=K_T^{\ast}(pt)\cong\mathbb{Z}[a_1^{\pm 1},...,a_{n-1}^{\pm 1}]\]In the equation $K_T^{\ast}(X)=R(T)\otimes_{\mathbb{Z}}R(T)/J$, denotes the first copy of $R(T)$ by $\mathbb{Z}[y_1^{\pm 1},...,y_{n-1}^{\pm 1}]$ and the second copy of $R(T)$ by $\mathbb{Z}[x_1^{\pm 1},...,x_{n-1}^{\pm 1}]$. Then the ideal $J$ is generated by $e_i(y_1,...,y_{n-1})-e_i(x_1,...,x_{n-1}), i=1,...,n-1$, where $e_i$ is the $i$-th symmetric polynomial in the corresponding variables. Equivalently, \begin{equation} \label{equation2} 
K_T^{\ast}(Fl(\mathbb{C}^n))\cong\frac{\mathbb{Z}[y_1^{\pm 1},...,y_n^{\pm 1},x_1,...,x_n]}{(J,(\prod\nolimits_{i=1}^n y_i)-1)}\end{equation} Notice that $x_i^{-1},i=1,...,n$ can be generated by some elements in the ideal $J$, where $J$ is the ideal generated by $e_i(y_1,...,y_{n})-e_i(x_1,...,x_{n})$, for all $i=1,...,n$.  

Let $G^{\mathbb{C}}$ be the complexification of a compact Lie group $G$ and $B\subset G^{\mathbb{C}}$ be a Borel subgroup. In our case, $G=SU(n),G^{\mathbb{C}}=SL(n,\mathbb{C})$. Then $G/T\approx G^{\mathbb{C}}/B$. $G^{\mathbb{C}}/B$ consists of even-real-dimensional Schubert cells, $C_{\omega}$ indexed by the elements in the Weyl Group $W$. That is, \[C_{\omega}=B\omega B/B,\omega\in W\]The closures of these cells are called \emph{Schubert varieties}: \[X_{\omega}=\overline{B\omega B}/B,\omega\in W\]For each Schubert variety $X_{\omega} ,\omega\in W$, denotes the $T$-equivariant structure sheaf on $X_{\omega}\subset G^{\mathbb{C}}/B$ by $[\mathcal{O}_{X_\omega}]$. It extends to the whole of $G^{\mathbb{C}}/B$ by defining it to be zero in the complement of $X_\omega$. Since $[\mathcal{O}_{X_\omega}]$ is a $T$-equivariant coherent sheaf on $G^{\mathbb{C}}/B$, it determines a class in $K_0(T,G^{\mathbb{C}}/B)$, the Grothendieck group constructed from the semigroup whose elements are the isomorphism classes of $T$-equivariant locally free sheaves. The elements ${[\mathcal{O}_{X_\omega}]}_{\omega\in W}$ form a $R(T)$-basis for the $R(T)$-module $K_0(T,G^{\mathbb{C}}/B)$. Since there is a canonical isomorphism between $K_0(T,G^{\mathbb{C}}/B)$ and $K_T(G^{\mathbb{C}}/B)=K_T(G/T)$ (see \cite{KostantKumar}), by abuse of notation we also denote ${[\mathcal{O}_{X_{\omega}}]}_{\omega\in W}$ as a linear basis in $K_T^{\ast}(G/T)$ over $R(T)$. 

On the other hand, the double Grothendieck polynomials $G_\omega,\omega\in W$, as Laurent polynomials in variables ${x_i,y_i},i=1,2,...,n$ form a basis of $K_{T\times B}(pt)\cong R(T)\otimes_\mathbb{Z} R(T)$ over $K_T(pt)\cong R(T)$. By the equivariant homotopy principle, \[K_{T\times B}(pt)=K_{T\times B}(M_{n\times n})\]where $M_{n\times n}$ denote the set of all $n\times n$ matrices over $\mathbb{C}$. By a theorem of \cite{KnutsonMiller}, we are able to identify the classes generated by matrix Schubert varieties in $K_{T\times B}(M_{n\times n})$ (matrix Schubert varieties form a cell decomposition of $M_{n\times n}/B$) with the double Grothendieck polynomials in $K_{T\times B}(pt)$. The open embedding $\iota\colon GL(n,\mathbb{C})\to M_{n\times n}$ induces a map in equivariant $K$-theory: \[\iota^\ast\colon K_{T\times B}(M_{n\times n})\to K_{T\times B}(GL(n,\mathbb{C}))=K_T(GL(n,\mathbb{C})/B)=K_T(SU(n)/T)\]Under this map, the classes generated by the matrix Schubert varieties in $K_{T \times B}(M_{n \times n})$ are mapped to the classes, $[\mathcal{O}_{X_\omega}]\in K_T(SU(n)/T)$, of the corresponding Schubert varieties in $SU(n)/T$. By identifications of the double Grothendieck polynomials in $K_{T\times B}(pt)$ and the classes generated by the matrix Schubert varieties in $K_{T\times B}(M_{n\times n})$, the map $\iota^\ast$ sends the double Grothendieck polynomials to the $T$-equivariant structure sheaves ${[\mathcal{O}_{X_\omega}]}_{\omega\in W}$, as a $R(T)$-basis in $K_T(G/T)\cong R(T)\otimes_{R(G)}R(T)$. For more results about the geometry and combinatorics of double Grothendieck polynomials and matrix Schubert varieties, see \cite{KnutsonMiller}.

By abuse of notation, from now on, we will take the double Grothendieck polynomials  $G_\omega(x,y),\omega\in W$ as a basis in $K_T^\ast(SU(n)/T)$ over $R(T)$.  Under our notations, notice that the top double Grothendieck polynomial $G_{id}(x,y)$ corresponds to the $T$-equivariant structure sheaf $[\mathcal{O}_{X_{\omega_0}}]$, where $\omega_0\in W$ is the permutation with the longest length, i.e. $\omega_0=s_ns_{n-1}...s_3s_2s_1$. 

For more about $K$-theory and $T$-equivariant $K$-theory of flag varieties, for example, see \cite{wfulton} and \cite{KostantKumar}.

\subsection{Restriction of $T$-equivariant $K$-theory of flag varieties to the fixed-point sets} \label{section5.4}

Since flag variety is compact, $Fl(\mathbb{C}^n)^T$, the $T$-fixed set is finite. By \cite{HHaradaLandweber}, we have the Kirwan injectivity map, i.e. the map \[\iota^{\ast}\colon K_T^{\ast}(Fl(\mathbb{C}^n))\rightarrow K_T^{\ast}(Fl(\mathbb{C}^n)^T)\]induced by the inclusion $\iota$ from $Fl(\mathbb{C}^n)^T$ to $Fl(\mathbb{C})$ is injective. We compute the restriction explicitly here. Notice that $Fl(\mathbb{C}^n)^T$ is indexed by the elements in the Weyl group $W=S_n$. The $T$-action on $\mathbb{C}^n$ splits into a sum of 1-dimensional vector spaces, call them $l_1,...,l_n$. The fixed points of $T$-action are the flags which can be written as:\[p_{\omega}=\langle l_{\omega(1)}\rangle\subset\langle l_{\omega(1)},l_{\omega(2)}\rangle\subset\langle l_{\omega(1)},l_{\omega(2)},l_{\omega(3)}\rangle\subset...\subset\langle l_{\omega(1)},...,l_{\omega(n)}\rangle=\mathbb{C}^n\]where $\omega\in W$ and call \[p_{id}=\langle l_1\rangle\subset\langle l_1,l_2\rangle\subset\langle l_1,l_2,l_3\rangle\subset...\subset\langle l_1,...,l_n\rangle=\mathbb{C}^n\]the base flag of $\mathbb{C}^n$. The description of the restriction map is as follows:

\begin{theorem} \label{theorem5.1}
Let $p_{\omega}$ be a fixed point in $Fl(\mathbb{C}^n)^T$ as above. The inclusion $\iota_{\omega}\colon p_{\omega}\rightarrow Fl(\mathbb{C}^n)$ induces a restriction \[\iota_{\omega}^{\ast}\colon K_T^{\ast}(Fl(\mathbb{C}^n))\to K_T^{\ast}(p_{\omega})=R(T)=\mathbb{Z}[y_1^{\pm 1},...,y_n^{\pm 1}]\]such that $\iota_{\omega}^{\ast}\colon y_i^{\pm 1}\rightarrow y_i^{\pm 1}, \iota_{\omega}^{\ast}\colon x_i\rightarrow y_{\omega(i)},i=1,...,n$. Also, the inclusion map $\iota\colon Fl(\mathbb{C}^n)^T\rightarrow Fl(\mathbb{C}^n)$ induces a map \[\iota^{\ast}\colon K_T^{\ast}(Fl(\mathbb{C}^n))\to K_T^{\ast}(Fl(\mathbb{C}^n)^T)=\oplus_{p_{\omega},\omega\in W}\mathbb{Z}[y_1^{\pm 1},...,y_n^{\pm 1}]\]whose further restriction to each component in the direct sum is just the map $\iota_{\omega}^{\ast}$.
\end{theorem}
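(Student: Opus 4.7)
The plan is to identify each generator of the presentation (\ref{equation2}) with a concrete geometric class in $K_T^*(Fl(\mathbb{C}^n))$ and then compute its restriction to an individual fixed point directly. First, I would observe that in the presentation $K_T^*(Fl(\mathbb{C}^n)) \cong R(T) \otimes_{\mathbb{Z}} R(T)/J$, the variables $y_1,\ldots,y_n$ generate the first copy of $R(T)$, i.e.\ the coefficient ring $K_T^*(pt)$. Hence any class in the $y$-variables is pulled back from a point along the structure map, and is therefore preserved by every $T$-equivariant restriction. In particular $\iota_\omega^*(y_i^{\pm 1}) = y_i^{\pm 1}$ for every $\omega \in W$.

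Next I would identify the $x_i$ geometrically. Under the Borel-type presentation $K_T^*(G/T) \cong R(T) \otimes_{R(G)} R(T)$, the second factor is realized by sending a character $\lambda$ of $T$ to the class of the associated line bundle $\mathcal{L}_\lambda = G \times_T \mathbb{C}_\lambda$ on $G/T$ (see \cite{wfulton,KostantKumar}). For $G = SU(n)$ and $\lambda$ the $i$-th fundamental character, $\mathcal{L}_\lambda$ is naturally identified on $Fl(\mathbb{C}^n) \cong G^{\mathbb{C}}/B$ with the tautological quotient line bundle $L_i := F_i/F_{i-1}$, where $F_1 \subset \cdots \subset F_n = \mathbb{C}^n$ is the tautological flag; this is the geometric content of the assignment $x_i \leftrightarrow [L_i]$. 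At the fixed point $p_\omega = \langle l_{\omega(1)} \rangle \subset \cdots \subset \langle l_{\omega(1)},\ldots,l_{\omega(n)} \rangle$, the fiber of $L_i$ is precisely the weight line $l_{\omega(i)}$, whose $T$-character is $y_{\omega(i)}$. Since restriction in equivariant $K$-theory sends the class of an equivariant line bundle to the character of its fiber at a fixed point, we obtain $\iota_\omega^*(x_i) = y_{\omega(i)}$.

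The second assertion of the theorem is then formal: because $Fl(\mathbb{C}^n)^T = \bigsqcup_{\omega \in W} p_\omega$ is a finite disjoint union of points, we have $K_T^*(Fl(\mathbb{C}^n)^T) \cong \bigoplus_{\omega \in W} K_T^*(p_\omega)$, and the map $\iota^*$ is automatically the direct sum of its $\omega$-components, each of which is $\iota_\omega^*$ by naturality of pullback along the inclusions $p_\omega \hookrightarrow Fl(\mathbb{C}^n)^T \hookrightarrow Fl(\mathbb{C}^n)$. The main technical point to nail down carefully is the identification of the formal variable $x_i$ with the $K$-theory class $[L_i]$ (as opposed to $[L_i^{-1}]$, where conventions in the literature differ); this must be done consistently with the chosen isomorphism (\ref{equation2}). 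Once that identification is fixed, the restriction formulae are an immediate consequence of the explicit description of the tautological flag at each fixed point.
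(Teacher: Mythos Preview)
Your proposal is correct and follows essentially the same approach as the paper. Both arguments first observe that the $y_i$ come from the coefficient ring $K_T^*(pt)$ and are therefore fixed by any equivariant restriction, and then identify each $x_i$ with the class of the associated line bundle $G\times_T\mathbb{C}_i$ on $G/T$ and compute its fiber at $p_\omega$; your additional identification of this bundle with the tautological quotient $L_i=F_i/F_{i-1}$ just makes the fiber computation at $p_\omega$ more transparent than the paper's direct but terser statement that $\iota_\omega^*(x_i)=[p_\omega\times\mathbb{C}_{\omega(i)}]=y_{\omega(i)}$.
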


\begin{proof} Consider $K_T^{\ast}(Fl(\mathbb{C}^n))$ as a module over $K_T^{\ast}(pt)=\mathbb{Z}[y_1^{\pm 1},...,y_n^{\pm 1}]$, the map \[K_T^{\ast}(Fl(\mathbb{C}^n))\to K_T^{\ast}(p)\]induced by mapping any point $p$ into $Fl(\mathbb{C}^n)$ is a surjective $R(T)$-module homomorphism and $K_T^\ast (Fl(\mathbb{C}^n))$ has a linear basis over $K_T^\ast (p)=R(T)=\mathbb{Z}[y_1^{\pm 1},...,y_n^{\pm 1}]$. Hence we must have $\func{\iota_\omega^\ast}{y_i^{\pm 1}}{y_i^{\pm 1}},i=1,...,n$, for all $\omega\in W$.
To find the image of $x_i$ under $\iota_{\omega}^{\ast}$, first, notice that in $K_T^{\ast}(pt)$, $y_i=[pt\times\mathbb{C}_i]$. $\mathbb{C}_i$ corresponds to the action of  $T=S^1\times...\times S^1$ on the $i$-th copy of $\mathbb{C}^n=\mathbb{C}\times...\times\mathbb{C}$ with weight 1 and acting trivally on all the other copies of $\mathbb{C}$. More generally, $y_{\omega(i)}=[pt\times\mathbb{C}_{\omega(i)}]$. In $K_T^{\ast}(p_\omega)$, $y_{\omega(i)}=[p_\omega \times\mathbb{C}_{\omega(i)}]$, where $p_\omega \times\mathbb{C}_{\omega(i)}$ is the $T$-line bundle over the point $p_\omega$. By the Hodgkin's result (see \cite{Hodgkin}), $K_T^{\ast}(G/T)=R(T)\otimes_{R(G)}K_G^\ast(G/T)(\cong R(T)\otimes_{R(G)} R(T))$. Following our use of notations in \ref{section5.3}, $x_i$ comes from the second copy of $R(T)$ (which is isomorphic to $K_G^{\ast}(G/T)$ under our identification). Hence, each $x_i$ is the class represented by the $G$-line bundle $G\times_T\mathbb{C}_i$ over $G/T$. $T$ acts on $G\times\mathbb{C}_i$ diagonally and $G\times_T\mathbb{C}_i$ is the orbit space of the $T$-action. In particular, $x_i$ is a $T$-line bundle over $G/T$ by restriction of $G$-action to $T$-action. So, $\iota_\omega^\ast(x_i)$ is simply the pullback $T$-line bundle of the map $\iota_\omega\colon p_\omega\to Fl(\mathbb{C}^n)$. For $i=1$, we have $\iota_\omega^\ast(x_1)=[p_\omega \times\mathbb{C}_{\omega(1)}]= y_{\omega(1)}$. Similarly, $\iota_\omega^\ast(x_i)=y_{\omega(i)}$ for $i=2,...,n$. And hence the result.
\end{proof}

\subsection{Relations between double Grothendieck polynomials and the Bruhat Ordering} \label{section5.5}

Recall our definition of the permuted double Grothendieck polynomials $G_{\omega}^{\gamma}$ in Section \ref{section5.2}: \[G_{\omega}^{\gamma}(x,y)=G_{\gamma^{-1}\omega}(x,y_{\gamma})=\pi_{\omega^{-1} \gamma}G_{id}(x,y_{\gamma})\] where $y_{\gamma}$ indicates the permutation of $y_1,...,y_n$ by $\gamma$. For $\gamma\in W$, define the permuted Bruhat ordering by \[v\leq_\gamma \omega\Leftrightarrow \gamma^{-1}v\leq\gamma^{-1}\omega\]

Notice that the permuted Bruhat ordering is related to the Schubert varieties in the following way: Each of the $T$-fixed points of a Schubert variety $X_\omega$ sits in one Schubert cell $C_v$ (the interior of a Schubert variety) for $v\leq\omega$. So the $T$-fixed point set can be identified as: \[(X_\omega)^T=\{v\mid v\leq\omega\}\]For a fixed $\gamma\in W$, we can define the permuted Schubert varieties by \[X_\omega^\gamma=\overline{\gamma B\gamma^{-1}\omega B}/B\]for any $\omega\in W$. Then the $T$-fixed point set of $X_\omega^\gamma$ is \[(X_\omega^\gamma)^T=\{v\mid v\leq_\gamma \omega\}\]Notice that $\{X_\omega^\gamma\}_{\omega\in W}$ also forms a cell decomposition of $G^\mathbb{C}/B\approx G/T$.

We define the support of the permuted double Grothendieck polynomials by \[\mbox{Supp}(G_{\omega}^{\gamma})=\{z\in W\mid G_{\omega}^{\gamma}|_z \neq 0\}\]Here we consider $G_{\omega}^{\gamma}$ as an element in $K_T^{\ast}(Fl(\mathbb{C}^n))$ (see Section \ref{section5.3}). So $G_{\omega}^{\gamma}|_z$ is the image of $G_\omega^\gamma$ under the restriction of the Kirwan injective map at the point $z\in W$. That is, \[\iota^\ast|_z\colon K_T^\ast(Fl(\mathbb{C}^n))\to K_T^\ast(p_z)\]Notice that the restriction rule follows Theorem \ref{theorem5.1}. That is, \[G_{\omega}^{\gamma}(x,y)|_z =G_{\omega}^{\gamma}(x_1,x_2,...,x_n,y_1,...,y_n)|_z =G_{\omega}(y_{z(1)},y_{z(2)},...,y_{z(n)},y_1,...,y_n)\]

\begin{examplen} Using the same notations as in the example in \ref{section5.2}, $G_{(23)}^{(12)}=(1-\frac{y_3}{x_1})\in K_T^{\ast}(Fl(\mathbb{C}^3))$. There are six fixed points for each element in $S_3$, \[G_{(23)}^{(12)}|_{(23)}\neq 0, G_{(23)}^{(12)}|_{(123)}\neq 0, G_{(23)}^{(12)}|_{(13)}=0\] \[G_{(23)}^{(12)}|_{(132)}=0, G_{(23)}^{(12)}|_{(12)}\neq 0, G_{(23)}^{(12)}|_{id}\neq 0\]So the support of a permuted double Grothendieck polynomial contains $id,(12),(23),(123)$. On the other hand,
\begin{eqnarray}
(X_{(23)}^{(12)})^T &=& \{v\in S_3\mid (12)v\leq (12)(23)=(123)\}\nonumber\\
&=& \{v\in S_3\mid (12)v\leq id,(12),(23)\quad\mbox{or}\quad (123)\}\nonumber\\
&=& \{v\in S_3\mid v\leq (12),id,(123)\quad \mbox{or}\quad (23)\}\nonumber\\ \nonumber
\end{eqnarray} which is the same as $\mbox{Supp}(G_{(23)}^{(12)})$. 
\end{examplen}

Now we will show a fundamental relation between the permuted double Grothendieck polynomials and the permuted Bruhat Orderings: 

\begin{theorem} \label{theorem5.2}
The support of a permuted double Grothendieck polynomial $G_{\omega}^{\gamma}$ is $\{v\mid v\leq_\gamma \omega\}$
\end{theorem}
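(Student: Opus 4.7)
The plan is to reduce to the case $\gamma = \mathrm{id}$ --- where the claim becomes $\mathrm{Supp}(G_\omega) = \{v : v \leq \omega\}$ in the standard Bruhat order --- and then handle that case via the identification of $G_\omega$ with the $T$-equivariant Schubert class $[\mathcal{O}_{X_\omega}]$ set up in Section \ref{section5.3}.

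For the reduction, I combine the definition $G_\omega^\gamma(x,y) = G_{\gamma^{-1}\omega}(x, y_\gamma)$ with the restriction rule of Theorem \ref{theorem5.1}, under which $x_i \mapsto y_{v(i)}$ and $y_j \mapsto y_j$ at the fixed point $p_v$. After substitution, $G_\omega^\gamma|_{p_v}$ becomes a polynomial in $y_1, \ldots, y_n$ in which both the ``$x$-slot'' and the ``$y$-slot'' of $G_{\gamma^{-1}\omega}$ have been shuffled by permutations of $\{y_1, \ldots, y_n\}$. Since any such permutation is an invertible ring automorphism of $R(T)$, applying the permutation that undoes $y \mapsto y_\gamma$ converts $G_\omega^\gamma|_{p_v}$ into $G_{\gamma^{-1}\omega}|_{p_{\gamma^{-1} v}}$ and preserves the property of being zero. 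Hence $v \in \mathrm{Supp}(G_\omega^\gamma)$ iff $\gamma^{-1} v \in \mathrm{Supp}(G_{\gamma^{-1}\omega})$, which by the $\gamma = \mathrm{id}$ case is equivalent to $\gamma^{-1} v \leq \gamma^{-1} \omega$, i.e.\ to $v \leq_\gamma \omega$ by the very definition of the permuted Bruhat order.

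For the base case $\gamma = \mathrm{id}$, one inclusion is essentially geometric: $G_\omega$ represents $[\mathcal{O}_{X_\omega}]$, which is supported on the Schubert variety $X_\omega$, and $(X_\omega)^T = \{v : v \leq \omega\}$, so $G_\omega|_{p_v} = 0$ whenever $v \not\leq \omega$, giving $\mathrm{Supp}(G_\omega) \subseteq \{v : v \leq \omega\}$. For the reverse inclusion I appeal to the classical non-vanishing of $T$-equivariant $K$-theoretic Schubert classes at fixed points below them in Bruhat order: the Kostant--Kumar localization formula (or its Graham-type positive refinement) expresses $[\mathcal{O}_{X_\omega}]|_{p_v}$ for $v \leq \omega$ as an explicit nonzero element of $R(T)$. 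A more self-contained alternative is to induct on $\ell(v)$, using how the isobaric divided differences $\pi_i$ interact with restriction at fixed points (the $K$-theoretic GKM recursion applied to $G_\omega = \pi_{\omega^{-1}} G_{\mathrm{id}}$), with the transparent base computation $G_{\mathrm{id}}|_{p_{\mathrm{id}}} = \prod_{i<j}(1 - y_j/y_i) \neq 0$ propagating non-vanishing along a reduced word for $\omega$.

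The principal obstacle is precisely this reverse inclusion in the base case: vanishing at $p_v$ for $v \not\leq \omega$ is immediate from support considerations, but non-vanishing at every $p_v$ with $v \leq \omega$ requires an explicit localization formula or an inductive propagation argument. Once this is secured, the substitution step for general $\gamma$ is routine bookkeeping with the definition of $\leq_\gamma$.
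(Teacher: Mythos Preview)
Your reduction from general $\gamma$ to $\gamma=\mathrm{id}$ is exactly what the paper does at the end of its proof: permute the $y$-variables by an automorphism of $R(T)$ and read off $v\in\mathrm{Supp}(G_\omega^\gamma)\Leftrightarrow \gamma^{-1}v\in\mathrm{Supp}(G_{\gamma^{-1}\omega})$. Where you diverge is the base case $\mathrm{Supp}(G_\omega)=\{v:v\le\omega\}$. The paper does \emph{not} invoke the sheaf-theoretic identification $G_\omega\leftrightarrow[\mathcal{O}_{X_\omega}]$ or any external localization formula; instead it gives a self-contained combinatorial induction on the length of $\omega$, writing $G_v=\pi_{i_l}G_{vs_{i_l}}$ and analyzing the two terms of the isobaric divided difference after restriction to $p_z$ (equation~(\ref{equation3})). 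Both inclusions are handled by case analysis on whether $z<zs_{i_l}$ or $z>zs_{i_l}$, using only the inductive hypothesis and elementary Bruhat-order facts. Your route is shorter and conceptually cleaner if one is willing to import the Kostant--Kumar/Graham nonvanishing (which is indeed in the literature, though not entirely trivial in $K$-theory), and the vanishing direction is immediate from support of the sheaf. The paper's route has the virtue of being entirely internal to the polynomial calculus set up in Section~\ref{section5.2}, requiring no sheaf theory beyond the motivating identification; it also makes transparent exactly how the $\pi_i$ recursion interacts with Bruhat order, which is the mechanism you allude to in your ``more self-contained alternative'' but do not carry out.
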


\begin{proof} We need to show $\mbox{Supp}(G_{\omega})=(X_{\omega})^T$ first. We do it by induction on the length of $v\in W$, $l(v)$, which stands for the minimum number of transpositions in all the possible choices of word expressions of $v$.

For $\omega=id$, $G_{id}$ is just the top Grothendieck polynomial. It is non-zero only at the identity and zero at all the other elements. Assume the inductive hypothesis that $\mbox{Supp}(G_{\omega})=(X_{\omega})^T$ for all $l(\omega)\leq l-1$. Consider $v\in W,l(v)=l$, write $v=s_{i_1}s_{i_2}...s_{i_l}$ where each $s_{i_j}$ is a transposition of elements $i_j,i_j +1$, let $\omega=vs_{{i_l}}=s_{i_1}...s_{i_{l-1}}$, so $l(\omega)=l-1$ and 
\begin{eqnarray}
G_v|_z &=& \pi_{v^{-1}}G|_z=\pi_{i_{l}}\pi_{i_{l-1}}...\pi_{i_1}G|_z=\pi_{i_l}G_{\omega}|_z\nonumber\\
&=&\frac{x_{i_l}G_{\omega}(x,y)-x_{i_l+1}G_{\omega}(x_{s_{i_l}},y)}{x_{i_l}-x_{i_l+1}}|_z\nonumber\\
&=&\frac{y_{z(i_l)}G_{\omega}(y_z,y)-y_{z(i_l+1)}G_{\omega}(y_{zs_{i_l}},y)}{y_{z(i_l)}-y_{z(i_l+1)}} \label{equation3}
\end{eqnarray} 
First, to prove that $\mbox{Supp}(G_v)\subset(X_v)^T$, suppose that $z\not\in(X_v)^T$, then $z\not\in(X_{\omega})^T$ since $\omega\leq v$. Since $l(\omega)=l-1$, we have $z\not\in\mbox{Supp}(G_{\omega})$. That is $G_{\omega}(y_z,y)=0$. Hence, \[G_v|_z=\frac{-y_{z(i_{l}+1)}G_{\omega}(y_{zs_{i_l}},y)}{y_{z(i_l)}-y_{z(i_l+1)}}\]We claim that it is zero. If it were not zero, then $G_{\omega}(y_{zs_{i_l}},y)=G_{\omega}(x,y)|_{zs_{i_l}}\neq 0$. Equivalently, $zs_{i_l}\in \mbox{Supp}(G_{\omega})=(X_{\omega})^T$. If $z<zs_{i_l}$, then $z\in(X_{\omega})^T$ which contradicts $z\not\in\mbox{Supp}(G_{\omega})$ shown before. If $z>zs_{i_l}$, then $s_{i_l}$ increases the length of $zs_{i_l}$. Then $zs_{i_l}\in(X_{\omega})^T$ implies that $z\in(X_{v})^T$ which contradicts $z\not\in(X_{v})^T$. So the claim is proved. i.e. $z\notin (X_v)^T\Rightarrow G_{v}|_z=0 \Leftrightarrow z\not\in\mbox{Supp}(G_{v})$.

Second, we need to prove that $(X_{v})^T\subset\mbox{Supp}(G_{v})$. Suppose that $z\not\in\mbox{Supp}(G_{v})$, i.e. $G_{v}|_z=0$. Assume that $z\in(X_{v})^T$. From (\ref{equation3}), 
\begin{equation} \label{equation4}
y_{z(i_l)}G_{\omega}(y_{z},y)=y_{z(i_{l}+1)}G_{\omega}(y_{zs_{i_l}},y)
\end{equation} Now there are two cases, $z=v$ and $z\neq v$. We consider these two cases separately.

If $z=v$, then $z\not\le w$ (since $l(\omega)=l-1$ and $l(z)=l(v)=l$)$\Leftrightarrow z\not\in(X_{\omega})^T=\mbox{Supp}(G_{\omega})\Leftrightarrow G_{\omega}|_z=0\Leftrightarrow G_{\omega}(y_z,y)=0\Leftrightarrow G_{\omega}(y_{zs_{i_l}},y)=0$. The last equality is by (\ref{equation4}). So we now have $G_{\omega}(x,y)|_{zs_{i_l}}=0\Leftrightarrow zs_{i_l}\not\in\mbox{Supp}(G_{\omega})=(X_{\omega})^T$. Since $zs_{i_l}=vs_{i_l}=\omega\in(X_{\omega})^T$, it's a contradiction.

If $z\neq v$, then $l(z)<l(v)$, then $l(z)\leq l-1$. Let $t\in W$ with $l(t)=l-1$ such that $z\leq t$. Although $t$ may not be the same as $\omega$ but $t=v's_{i_j}$ for some $j\in{1,...,l}$ ($v'$ is another word expression for $v$) By our inductive hypothesis, $\mbox{Supp}(G_{t})=(X_{t})^T$, so
\begin{equation} \label{equation5}
z\in\mbox{Supp}(G_{t})\Leftrightarrow G_{t}(y_z,y)=G_{t}(x,y)|_z\neq 0
\end{equation} But $zs_{i_j}\not\le t$ implies that $zs_{i_j}\not\in(X_{t})^T=\mbox{Supp}(G_{t})$. By (\ref{equation4}), (but now we have $\omega$ replaced by $t$), $G_{t}(y_{zs_{i_j}},y)=0$. By (\ref{equation3}) and (\ref{equation5}), we have $G_{v}|_z\neq 0$ contradicting our initial assumption that $z\notin\mbox{Supp}(G_v)$.

Hence, we have $z\not\in\mbox{Supp}(G_{v})\Rightarrow z\not\in(X_{v})^T$. The induction step is done.

Then we need to show that the statement holds for the permuted double Grothendieck polynomials, i.e. $\mbox{Supp}(G_{\omega}^{\gamma})=(X_{\omega}^{\gamma})^T$. By definition, $G_{\omega}^{\gamma}(x,y)=G_{\gamma^{-1}\omega}(x,y_{\gamma})$, so, \[\mbox{Supp}G_{\gamma^{-1}\omega}(x,y)=(X_{\gamma^{-1}\omega})^T=\{v\in W\mid v\leq\gamma^{-1}\omega\}\]By permuting the $y$'s variables by $\gamma$, we obtain 
\begin{eqnarray}
\mbox{Supp}(G_{\omega}^{\gamma})&=& \mbox{Supp}G_{\gamma^{-1}\omega}(x,y_{\gamma})\nonumber\\
&=&\{\gamma v\in W\mid v\leq\gamma^{-1}\omega\}\nonumber\\
&=&\{v\in W\mid \gamma^{-1}v\leq \gamma^{-1}\omega\}\nonumber\\
&=&\{(X_{\omega}^{\gamma})^T\}\nonumber
\end{eqnarray}
\end{proof}

\subsection{Main theroem} \label{section5.6}

In this subsection, we prove the following result:
\begin{theorem} \label{theorem5.3}
Let $\mathcal{O}_{\lambda}$ be a generic coadjoint orbit of $SU(n)$. Then \[K^{\ast}(\mathcal{O}_{\lambda}//T(\mu))\cong \frac{\mathbb{Z}[x_1,...,x_n,y_1^{\pm 1}]}{(I,((\prod\nolimits^n_{i=1}y_i)-1),\pi_v G(x,y_r))}\]for all $v,r\in S_n$ such that $\sum\nolimits^n_{i=k+1}\lambda_{v(i)}<\sum\nolimits^n_{i=k+1}\mu_{r(i)}$ for some $k=1,...,n-1$. $I$ is the difference between $e_i(x_1,...,x_n)-e_i(y_1,...,y_n)$ for all $i=1,...,n$, where $e_i$ is the $i$-th elementary symmetric polynomial.
\end{theorem}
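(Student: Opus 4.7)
The plan is to combine the surjectivity of the Kirwan map (Corollary \ref{corollary2.4}) with the description of its kernel (Theorem \ref{corollary3.1}), specialized to $M=\mathcal{O}_{\lambda}$ with shifted moment map $\phi-\mu$. Since $\mathcal{O}_{\lambda}\cong SU(n)/T= Fl(\mathbb{C}^{n})$ as a $T$-space and $T$ acts freely on $(\phi-\mu)^{-1}(0)$ (as $\mu$ is a regular value), Corollary \ref{corollary2.4} gives a surjection $\kappa\colon K_{T}^{\ast}(\mathcal{O}_{\lambda})\to K^{\ast}(\mathcal{O}_{\lambda}//T(\mu))$, and Theorem \ref{corollary3.1} identifies $\ker\kappa$ with the ideal $\langle K_{T}^{\mathfrak{t}}\rangle=\bigcup_{\xi\in\mathfrak{t}}K_{T}^{\xi}$, where $K_{T}^{\xi}=\{\alpha:\alpha|_{p_{z}}=0$ for every $z\in W$ with $\langle\lambda_{z}-\mu,\xi\rangle\leq 0\}$. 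Using the presentation (\ref{equation2}) for $K_{T}^{\ast}(Fl(\mathbb{C}^{n}))$, the relations $I$ and $(\prod y_{i})-1$ are already built into the ambient ring, so the theorem reduces to identifying $\langle K_{T}^{\mathfrak{t}}\rangle$ with the ideal generated by the listed $\pi_{v}G(x,y_{r})$.

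For the forward inclusion, rewrite $\pi_{v}G(x,y_{r})=G_{v^{-1}}(x,y_{r})=G_{rv^{-1}}^{r}(x,y)$. By Theorem \ref{theorem5.2} its support is $\{z\in W : r^{-1}z\leq v^{-1}\}$, with maximum element $z=rv^{-1}$ in the $\leq_{r}$-Bruhat order. Define $\xi_{k,r}\in\mathfrak{t}$ by $\langle\nu,\xi_{k,r}\rangle=\sum_{i=k+1}^{n}\nu_{r(i)}$ for $\nu\in\mathfrak{t}^{\ast}$, so that a direct computation using $\phi(p_{z})=\lambda_{z}$ yields $\langle\lambda_{rv^{-1}}-\mu,\xi_{k,r}\rangle=\sum_{i=k+1}^{n}\lambda_{v(i)}-\sum_{i=k+1}^{n}\mu_{r(i)}$. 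Assuming without loss of generality $\lambda_{1}>\cdots>\lambda_{n}$, the partial-sum function $z\mapsto\langle\lambda_{z},\xi_{k,r}\rangle$ is monotone along chains in $\leq_{r}$ --- via the compatibility between Bruhat order on $S_{n}$, the induced order on $(n-k)$-subsets of $\{1,\ldots,n\}$, and the fact that set-complementation reverses that order --- so every $z$ in the support satisfies $\langle\lambda_{z}-\mu,\xi_{k,r}\rangle\leq\langle\lambda_{rv^{-1}}-\mu,\xi_{k,r}\rangle<0$ under the hypothesis. Therefore $\pi_{v}G(x,y_{r})\in K_{T}^{-\xi_{k,r}}\subseteq\ker\kappa$.

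The reverse inclusion is the main obstacle. Here the strategy is to reduce a general $\xi\in\mathfrak{t}$ to the distinguished directions $\pm\xi_{k,r}$ via the hyperplane arrangement on $\mathfrak{t}$ cut out by the conditions $\langle\lambda_{z}-\mu,\cdot\rangle=0$. For $\xi$ in any fixed open chamber, the ``low'' set depends only on that chamber, and each chamber is described by finitely many inequalities $\sum_{i=k+1}^{n}(\lambda_{v(i)}-\mu_{r(i)})<0$. Given such a $\xi$, I would expand any $\alpha\in K_{T}^{\xi}$ in the basis $\{G_{\omega}^{r}\}_{\omega\in W}$ adapted to a well-chosen $r$ and use the Bruhat-triangularity provided by Theorem \ref{theorem5.2} (the support of $G_{\omega}^{r}$ lies in $\{z:z\leq_{r}\omega\}$) to solve triangularly for the coefficients, showing that only basis vectors indexed by $\omega=rv^{-1}$ with the hypothesis active actually contribute. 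The principal hurdle is to propagate that triangular decomposition through the $R(T)$-module structure and through the quotient by $I$ and $(\prod y_{i})-1$, so that the ideal generated by the $\pi_{v}G(x,y_{r})$ genuinely exhausts $\ker\kappa$ rather than merely providing an $R(T)$-spanning subset. I expect this to require an induction on Bruhat length together with the compatibility of the isobaric operators $\pi_{i}$ with the ideal $I$ of differences of elementary symmetric polynomials.
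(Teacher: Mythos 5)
The forward inclusion (that each $\pi_{v}G(x,y_{r})$ satisfying the partial-sum inequality lies in $\ker\kappa$) is essentially the same argument as the paper's: rewrite $\pi_v G(x,y_r)=G_{rv^{-1}}^{r}$, identify its support via Theorem~\ref{theorem5.2}, and use that the linear functional $\eta_k^\gamma=\sum_{i=k+1}^n e_{\gamma(i)}$ is monotone along the permuted Bruhat order and peaks at the top of the support. Up to a sign convention on $\xi$ versus $-\xi$ (immaterial here, since both lie in $\mathfrak{t}$ and the kernel is generated by all $K_T^\xi$), this part is fine.

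The reverse inclusion, however, is where your sketch stops short, and I think you have also misdiagnosed where the work actually is. You worry about ``propagating the triangular decomposition through the $R(T)$-module structure and through the quotient by $I$,'' and you suggest an induction on Bruhat length. Neither of these is in fact the obstruction. The paper disposes of the basis-expansion step by invoking Lemma~\ref{lemma5.4} (the $K$-theoretic version of Goldin's upper-triangularity lemma): for $\gamma\in W$ chosen so that $\xi(\lambda_\gamma)$ is minimal, writing $\alpha=\sum_\omega a_\omega^\gamma G_\omega^\gamma$, every $\omega$ with $a_\omega^\gamma\neq0$ already has $\operatorname{Supp}(G_\omega^\gamma)\subset M_\xi^\mu$. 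No issue with the $R(T)$-module structure or the ideal $I$ arises here; that is handled once and for all by the lemma.

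The genuine remaining step, which your sketch is missing, is a purely numerical telescoping argument reducing the arbitrary $\xi$ to the distinguished directions $\eta_k^\gamma$. Write $\xi=\sum_i b_i e_i$. Minimality of $\xi(\lambda_\gamma)$, together with $\lambda_1>\cdots>\lambda_n$, forces $b_{\gamma(1)}\leq\cdots\leq b_{\gamma(n)}$. Suppose, toward a contradiction, that for some $\omega$ occurring in the expansion \emph{no} $k$ satisfies $\eta_k^\gamma(\lambda_\omega)<\eta_k^\gamma(\mu)$, i.e.\ all the partial sums $\sum_{i=k+1}^n\lambda_{\omega^{-1}\gamma(i)}\geq\sum_{i=k+1}^n\mu_{\gamma(i)}$. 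Multiplying the $k$-th of these by $(b_{\gamma(k+1)}-b_{\gamma(k)})\geq0$, summing over $k$, and using $\sum\lambda_i=\sum\mu_i=0$ yields $\xi(\lambda_\omega)\geq\xi(\mu)$, contradicting $\omega\in\operatorname{Supp}(\alpha)\subset M_\xi^\mu$. This ``chamber decomposition into $\eta_k^\gamma$ with nonnegative coefficients'' is the crux; without it your reverse inclusion is a plan, not a proof, and the induction on Bruhat length you propose does not obviously substitute for it.
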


It is a $K$-theoretic analogue of the main result in \cite{GGoldin}.

To make the symplectic picture more explicit, we denote $M=\mathcal{O}_{\lambda}\approx SU(n)/T$ to be the generic coadjoint orbit. So we have $K_T^{\ast}(M)=K_T^{\ast}(\mathcal{O}_{\lambda})=K_T^{\ast}(Fl(\mathbb{C}^n))$. For $\lambda\in\mathfrak{t}^{\ast}, \lambda=(\lambda_1,...,\lambda_n)$, assume that  $\lambda_1>\lambda_2>...>\lambda_n$, and $\lambda_1+...+\lambda_n=0$. Since $M=\mathcal{O}_{\lambda}$ is compact, $M^T$ has only a finite number of points. The kernel of the Kirwan map $\kappa$ is generated by a finite number of components, see Theorem \ref{corollary3.1} and \cite{HHaradaLandweber}. More specifically, let $M_{\xi}^{\mu}\subset M, \xi\in\mathfrak{t}$ be the set of points where the image under the moment map $\phi$ lies to one side of the hyperplane $\xi^{\perp}$ through $\mu=(\mu_1,...,\mu_n)\in\mathfrak{t}^{\ast}$, i.e. \[M_{\xi}^{\mu}=\{m\in M\mid\langle\xi,\phi(m)\rangle\leq\langle\xi,\mu\rangle\}\]Then the kernel of $\kappa$ is generated by \[K_{\xi}=\{\alpha\in K_T^{\ast}(M)\mid\mbox{Supp}(\alpha)\subset M_{\xi}^{\mu}\}\]That is, \[\ker(\kappa)=\sum_{\xi\in\mathfrak{t}}K_\xi\]

Now, we are going to compute the kernel explicitly. Our proof is similar to the results in \cite{GGoldin}. In \cite{GGoldin}, Goldin proved a very similar result in rational cohomology by using the permuted double Schubert polynomials as a linear basis of $H_T^{\ast}(M)$ over $H_T^{\ast}(pt)$. In $K$-theory, the permuted double Grothendieck polynomials are used as a linear basis of $K_T^{\ast}(M)$ over $K_{T}^{\ast}(pt)\cong R(T)$. The following lemma will be used in our proof of Theorem \ref{theorem5.3}:

\begin{lemma} \label{lemma5.4}
Let $\mathcal{O}_\lambda$ be a generic coadjoint orbit of $SU(n)$ through $\lambda\in\mathfrak{t}^\ast$. Let $\alpha\in K_T^\ast(\mathcal{O}_\lambda)$ be a class with $\mbox{Supp}(\alpha)\subset(\mathcal{O_\lambda})_\xi^\mu$. Then there exists some $\gamma\in W$ such that if $\alpha$ is decomposed in the $R(T)$-basis $\{G_\omega^\gamma\}_{\omega\in W}$ as \[\alpha=\sum_{\omega\in W}a_\omega^\gamma G_\omega^\gamma\]where $a_\omega^\gamma\in R(T)$, then $a_\omega^\gamma\neq 0$ implies $\mbox{Supp}(G_\omega^\gamma)\subset(\mathcal{O_\lambda})_\xi^\mu$. Indeed, $\gamma$ can be chosen such that $\xi$ attains its minimum at $\phi(\lambda_\gamma)$, where $\lambda_\gamma=(\lambda_{\gamma^{-1}(1)},...,\lambda_{\gamma^{-1}(n)})\in\mathfrak{t}^\ast$.
\end{lemma}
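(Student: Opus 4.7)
The plan is to exploit the triangular structure of the basis $\{G_\omega^\gamma\}_{\omega\in W}$ from Theorem~\ref{theorem5.2} together with a monotonicity of $\langle\xi,\lambda_{(\cdot)}\rangle$ along the permuted Bruhat order $\leq_\gamma$ for a well-chosen $\gamma$. The guiding picture is that $\gamma$ should be chosen so that the permuted Schubert cells $X_\omega^\gamma$ are adapted to the Morse function $\mu_\xi=\langle\xi,\phi\rangle$ on $\mathcal{O}_\lambda$.

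I would first fix $\gamma\in W$ with $\langle\xi,\lambda_\gamma\rangle=\min_{w\in W}\langle\xi,\lambda_w\rangle$, so that $p_\gamma$ is the bottom fixed point of $\mu_\xi$. Writing $u=\gamma^{-1}v$, $u'=\gamma^{-1}\omega$ and $\xi'_j:=\xi_{\gamma(j)}$, a change of variables gives $\langle\xi,\lambda_{\gamma u}\rangle=\sum_j\xi'_{u(j)}\lambda_j$; the minimality of $\gamma$ is then equivalent (via the rearrangement inequality, using $\lambda_1>\dots>\lambda_n$) to $\xi'$ being weakly increasing. For any covering relation $u<ut_{ab}$ in standard Bruhat order (so $a<b$ and $u(a)<u(b)$), a short direct computation yields
\[
\langle\xi,\lambda_{\gamma ut_{ab}}\rangle-\langle\xi,\lambda_{\gamma u}\rangle=(\xi'_{u(b)}-\xi'_{u(a)})(\lambda_a-\lambda_b)\geq 0,
\]
and chaining covering relations produces the key monotonicity: $v\leq_\gamma\omega\Rightarrow \langle\xi,\lambda_v\rangle\leq\langle\xi,\lambda_\omega\rangle$. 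Combined with $\mbox{Supp}(G_\omega^\gamma)=\{v\mid v\leq_\gamma\omega\}$ from Theorem~\ref{theorem5.2}, this shows that $\omega\in(\mathcal{O}_\lambda)_\xi^\mu$ automatically forces $\mbox{Supp}(G_\omega^\gamma)\subset(\mathcal{O}_\lambda)_\xi^\mu$.

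It therefore suffices to show that $a_\omega^\gamma\neq 0$ implies $\omega\in(\mathcal{O}_\lambda)_\xi^\mu$. I would do this by downward induction on $\leq_\gamma$: pick $\omega$ maximal in $\leq_\gamma$ among $\{\omega\mid a_\omega^\gamma\neq 0\}$ and restrict $\alpha$ at the fixed point $p_\omega$ using the Kirwan-injective map. By Theorem~\ref{theorem5.2}, $G_{\omega'}^\gamma|_\omega=0$ unless $\omega\leq_\gamma\omega'$, and by maximality the only surviving term is $\omega'=\omega$. Since $G_\omega^\gamma|_\omega\neq 0$, we obtain $\alpha|_\omega=a_\omega^\gamma G_\omega^\gamma|_\omega\neq 0$, so $\omega\in\mbox{Supp}(\alpha)\subset(\mathcal{O}_\lambda)_\xi^\mu$; by the previous paragraph $\mbox{Supp}(G_\omega^\gamma)\subset(\mathcal{O}_\lambda)_\xi^\mu$. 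Replacing $\alpha$ by $\alpha-a_\omega^\gamma G_\omega^\gamma$ (whose support still lies in $(\mathcal{O}_\lambda)_\xi^\mu$) produces a class with strictly fewer nonzero coefficients, and iterating the procedure finitely many times concludes the argument.

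The main obstacle is the monotonicity lemma of the second paragraph: it is where the specific choice of $\gamma$ enters, and it fuses the generic hypothesis $\lambda_1>\dots>\lambda_n$ with the interplay between the linear functional $\xi$ and the Bruhat-order combinatorics. Once that lemma is in place, the triangular-elimination step is a clean consequence of the support identification of Theorem~\ref{theorem5.2} together with the Kirwan injectivity of the restriction to fixed points.
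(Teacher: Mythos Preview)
Your proposal is correct and is precisely the argument the paper intends: the paper's own proof is the single line ``essentially the same as Theorem~3.1 in \cite{GGoldin}'', and your monotonicity-plus-triangular-elimination argument (choose $\gamma$ minimizing $\langle\xi,\lambda_{(\cdot)}\rangle$, use Theorem~\ref{theorem5.2} for the support description, then peel off maximal terms using that $R(T)$ is an integral domain) is exactly that proof transported from cohomology to $K$-theory.
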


\begin{proof} The proof is essentially the same as Theorem 3.1 in \cite{GGoldin}.
\end{proof}

\begin{proof} [Proof of Theorem \ref{theorem5.3}]: Let $e_i$ be the coordinate functions on $\mathfrak{t}^\ast$. That is, for $\lambda=(\lambda_1,\lambda_2,...,\lambda_n)\in\mathfrak{t}^\ast$, $e_i(\lambda)=\lambda_i$. For $\gamma\in S_n$, define $\eta_k^\gamma$by \[\eta_k^\gamma=\sum_{i=k+1}^n e_{\gamma(i)}\] We compute $M_{\eta_k^\gamma}^{\mu}$ explicitly: \begin{eqnarray}
M_{\eta_k^\gamma}^{\mu}&=&\{m\in M\mid\langle\eta_k^\gamma , \phi(m)\rangle\leq\langle\eta_k^\gamma,\mu\rangle\}\nonumber\\
&=&\{m\in M\mid\eta_k^{\gamma}(\phi(m))\leq\eta_k^{\gamma}(\mu)\}\nonumber\\
&=&\{m\in M\mid\eta_k^{\gamma}(\phi(m))\leq\sum\nolimits_{i=k+1}^n \mu_{\gamma(i)}\}\nonumber
\end{eqnarray}For any $\omega\in W$,
\begin{eqnarray}
\eta_k^{\gamma}(\lambda_{\omega})&=&\sum_{i=k+1}^n e_{\gamma(i)}(\lambda_{\omega})=\sum_{i=k+1}^n e_{\gamma(i)}(\lambda_{\omega^{-1}(1)},...,\lambda_{\omega^{-1}(n)})\nonumber\\
&=& \sum_{i=k+1}^n \lambda_{\omega^{-1}\gamma(i)}
\nonumber
\end{eqnarray} Notice that $\eta_k^{\gamma}$ attains minimum at $\lambda_{\gamma}$ (due to our assumption that $\lambda_1\geq\lambda_2\geq...\geq\lambda_n$) and respects the permuted Bruhat ordering, i.e. \[\eta_k^{\gamma}(\lambda_{v})\leq\eta_k^{\gamma}(\lambda_{\omega})\]if $v\leq_\gamma \omega$. By restriction to the domain $\mbox{Supp}(G_{\omega}^{\gamma})=(X_{\omega}^{\gamma})^T=\{v\in W\mid v\leq_\gamma w\}=\{v\in W\mid \gamma^{-1}v\leq\gamma^{-1}\omega\}$, $\eta_k^\gamma$ attains its maximum at $\lambda_{\omega}$ and minimum at $\lambda_{\gamma}$. If $\eta_k^{\gamma}(\lambda_{\omega})=\sum\nolimits_{i=k+1}^n \lambda_{\omega^{-1}\gamma(i)}<\sum\nolimits_{i=k+1}^n \mu_{\gamma(i)}$, then  for $v\in(X_{\omega}^{\gamma})^T$, \[ \eta_k^{\gamma}(\lambda_{v})=\sum_{i=k+1}^n \lambda_{v^{-1}\gamma(i)}\leq\sum_{i=k+1}^n\lambda_{\omega^{-1}\gamma(i)}<\sum_{i=k+1}^n \mu_{\gamma(i)}\]and hence \[\mbox{Supp}(G_{\omega}^{\gamma})=(X_{\omega}^{\gamma}
)^T=\{v\in W\mid\gamma^{-1}v\leq\gamma^{-1}\omega\}\subset M_{\eta_k^{\gamma}}^{\mu}\]
Since $G_{\omega}^{\gamma}(x,y)=\pi_{\omega^{-1}\gamma}G(x,y_{\gamma})$, we have $\pi_vG(x,y_{\gamma})\in\ker(\kappa)$ if $\sum\nolimits_{i=k+1}^n \lambda_{v(i)}<\sum\nolimits_{i=k+1}^n \mu_{\gamma(i)}$.

For the other direction, we need to show that the classes $\pi_v G(x,y_\gamma)$ with $v,\gamma\in W$ having the property that $\sum\nolimits_{i=k+1}^n \lambda_{v(i)}<\sum\nolimits_{i=k+1}^n \mu_{\gamma(i)}$ for some $k\in\{1,...,n-1\}$ actually generate the whole kernel. Let $\alpha\in K_T^{\ast}(M)$ be a class in $\ker(\kappa)$, so $\mbox{Supp}(\alpha)\subset M_{\xi}^{\mu}$ for some $\xi\in\mathfrak{t}$. We take $\gamma\in W$ such that $\xi(\lambda_{\gamma})$ attains its minimum. Decompose $\alpha$ over the $R(T)$-basis $\{G_\omega^\gamma\}_{\omega\in W}$,
\[\alpha=\sum_{\omega\in W} a_{\omega}^{\gamma}G_{\omega}^{\gamma}\]where $a_{\omega}^{\gamma}\in R(T)$. By Lemma \ref{lemma5.4}, we need to show that $\mbox{Supp}(G_{\omega}^{\gamma})\subset M_{\eta_k^{\gamma}}^{\mu}$ for some $k$. Since $\eta_k^\gamma$ is preserved by the permuted Bruhat ordering and attains its maximum at $\lambda_{\omega}$ in the domain $\mbox{Supp}(G_{\omega}^{\gamma})$, we just need to show that \begin{equation} \label{equation6}
\eta_{k}^{\gamma}(\lambda_{\omega})<\eta_k^{\gamma}(\mu)
\end{equation} for some $k$. It is actually purely computational: Suppose (\ref{equation6}) does not hold for all $k$. We have
\begin{eqnarray}
\lambda_{\omega^{-1}\gamma(n)} &\geq& \mu_{\gamma(n)}\nonumber\\
&\vdots& \nonumber\\
\lambda_{\omega^{-1}\gamma(2)}+...+\lambda_{\omega^{-1}\gamma(n)}&\geq& \mu_{\gamma(2)}+...+\mu_{
\gamma(n)}\nonumber
\end{eqnarray} For $\xi=\sum\nolimits_{i=1}^n b_ie_i, b_1,...,b_n\in\mathbb{R}$ (recall that $\xi$ attains its minmum at $\lambda_\gamma$ by our choice of $\gamma\in W$), we have $\xi(\lambda_\gamma)\leq\xi(\lambda_{s_i\gamma})$ where $s_i$ is a transposition of $i$ and $i+1$. And hence
\[b_i \lambda_{\gamma^{-1}(i)}+b_{i+1}\lambda_{\gamma^{-1}(i+1)}\leq b_i \lambda_{\gamma^{-1}(i+1)}+b_{i+1}\lambda_{
\gamma^{-1}(i)}\]By our assumption that $\lambda_i>\lambda_{i+1}$, we get $b_{\gamma(i)}\leq b_{\gamma(i+1)}$. And hence $b_{\gamma(1)}\leq b_{\gamma(2)}\leq...\leq b_{\gamma(n)}$. Then,
\begin{eqnarray}
(b_{\gamma(n)}-b_{\gamma(n-1)})\lambda_{\omega^{-1}\gamma(n)}&\geq& (b_{\gamma(n)}-b_{\gamma(n-1)})\mu_{\gamma(n)}\nonumber\\
(b_{\gamma(n-1)}-b_{\gamma(n-2)})(\lambda_{\omega^{-1}\gamma(n-1)}+\lambda_{\omega^{-1}\gamma(n)})&\geq& (b_{\gamma(n-1)}-b_{\gamma(n-2)})(\mu_{\gamma(n-1)}+\mu_{\gamma(n)})\nonumber\\
&\vdots& \nonumber\\
(b_{\gamma(2)}-b_{\gamma(1)})(\lambda_{\omega^{-1}\gamma(2)}+...+\lambda_{\omega^{-1}\gamma(n)})&\geq& (b_{\gamma(2)}-b_{\gamma(1)})(\mu_{\gamma(2)}+...+\mu_{\gamma(n)})\nonumber\\ \nonumber
\end{eqnarray} Using $\sum\nolimits_{i=1}^n\lambda_i=0=\sum\nolimits_{i=1}^n\mu_i$ and summing up all the above inequalities to get 
\begin{eqnarray}
\sum_{i=1}^n b_{\gamma(i)}\lambda_{\omega^{-1}\gamma(i)}&\geq&\sum_{i=1}^n b_i\mu_i\nonumber\\
\Leftrightarrow\sum_{i=1}^n b_i\lambda_{\omega^{-1}(i)}&\geq&\sum_{i=1}^n b_i\mu_i\nonumber\\
\Leftrightarrow \xi(\lambda_{\omega})&\geq&\xi(\mu)\nonumber
\end{eqnarray} the last inequality contradicts $\mbox{Supp}(\alpha)\subset M_{\xi}^{\mu}$ since $\lambda_{\omega}$ has the property that $\omega\in\mbox{Supp}(\alpha)$. So (\ref{equation6}) is true.

So the kernel $\ker(\kappa)$ is generated by the set $\pi_v G(x,y_\gamma)$ for $v,\gamma\in W$ satisfying $\sum_{i=k+1}^n\lambda_{v(i)}<\sum\nolimits_{i=k+1}^n \mu_{\gamma(i)}$ for some $k=1,...,n-1$. By (\ref{equation2}) and the surjectivity of the Kirwan map $\kappa$, \[\kappa\colon K_T^\ast(SU(n)/T)=K_T^{\ast}(\mathcal{O}_\lambda)\to K_T^\ast(\phi^{-1}(\mu))\cong K^\ast(\mathcal{O}_\lambda//T(\mu))\]It implies that \[K^{\ast}(\mathcal{O}_\lambda//T(\mu))=K_T^{\ast}(\mathcal{O}_\lambda)/\ker(\kappa)\] With $\ker(\kappa)$ explicitly computed and by (\ref{equation2}), Theorem \ref{theorem5.3} is proved.
\end{proof}

\section*{Acknowledgements}
The author would like to thank Professor Sjamaar for all his encouragements, advice, teaching and pointing out the calculation mistakes in the first edition of this paper. The author is also grateful to the referee for the critical comments.

\end{document}